\providecommand{\U}[1]{\protect\rule{.1in}{.1in}}
\newtheorem{theorem}{Theorem}[section]
\newtheorem{lemma}{Lemma}[section]
\theoremstyle{definition}
\newtheorem{definition}{Definition}[section]
\theoremstyle{remark}
\newtheorem{remark}{Remark}[section]
\numberwithin{equation}{section}
\begin{document}
	\begin{frontmatter}
		
		\title{Remarks on finite-approximate controllability of impulsive evolution systems via resolvent-like operator in Hilbert spaces}
		
		

		\author[1]{Javad A. Asadzade}
		\ead{javad.asadzade@emu.edu.tr}
            \author[1,2]{Nazim I. Mahmudov}
		\ead{nazim.mahmudov@emu.edu.tr}
		\cortext[cor1]{Corresponding author}

            \address[1]{Department of Mathematics, Eastern Mediterranean University, Mersin 10, 99628, T.R. North Cyprus, Turkey}
	\address[2]{Research Center of Econophysics, Azerbaijan State University of Economics (UNEC), Istiqlaliyyat Str. 6, Baku, 1001,  Azerbaijan}
	
		

		\begin{abstract}
            In this manuscript, we examine impulsive evolution systems in Hilbert spaces. Using a resolvent-like operator, we first establish the finite-approximate controllability for linear systems. Subsequently, by applying the Schauder fixed-point theorem (SFPT), we prove the existence of a solution and demonstrate the finite-approximate controllability of semilinear impulsive systems in Hilbert spaces. Finally, we extend these results to a broader application, specifically to the heat equation.
		\end{abstract}

		\begin{keyword}
Finite-approximate controllability, existence and uniqueness, impulsive systems.
		\end{keyword}
  
	\end{frontmatter}
\section{Introduction}

Controllability is a cornerstone concept in control theory, playing a pivotal role in both deterministic and stochastic control of dynamical systems. It is essential for addressing various control challenges, including the irreducibility of transition semigroups, the stabilization of unstable systems through feedback mechanisms, and the formulation of optimal control strategies. A system is considered controllable if it can be steered to any desired state within a specified time using appropriate control inputs. Within control theory, several types of controllability are recognized, including exact, null, approximate, interior, velocity, boundary, and finite-approximate controllability, among others.

In practical applications, however, achieving exact controllability is often challenging. This has led to a growing focus on approximate and finite-approximate controllability in research. Both concepts relate to the ability to guide a system toward a desired state, but they differ in scope and approach. Specifically, \textbf{the distinction between approximate controllability and finite-approximate controllability} (\(A\)-controllability and \(F_A\)-controllability, respectively) lies in their treatment of time and precision. \underline{\(A\)-controllability} allows a system to be brought arbitrarily close to the desired state, permitting small deviations. Here, the system can approach the target state with increasing accuracy over time, although it may never reach the exact state. In contrast, \underline{\(F_A\)-controllability} requires the system to reach a state within a specified tolerance or precision in a finite amount of time. While both approaches tolerate slight inaccuracies, \(F_A\)-controllability is distinguished by its finite time constraint, as opposed to \(A\)-controllability’s potential for an indefinite time horizon.

Impulses and impulsive systems play a significant role in the study of dynamic systems. These systems are characterized by abrupt changes in their state at specific moments, referred to as impulses. Impulsive systems are highly relevant for modeling real-world phenomena where sudden transitions occur, such as impacts in mechanical systems, bursts of activity in biological systems, or instantaneous control actions in engineering. Understanding the controllability of impulsive systems is crucial for designing control and stabilization strategies to address these discontinuities effectively. Additionally, impulsive systems appear in diverse fields, including physics, biology, economics, and engineering, underscoring the importance of their study for solving practical problems. For a more detailed analysis of impulsive systems, readers may refer to the monograph \cite{laks}.

The concept of controllability in dynamic control systems ensures that, with appropriate control functions, a system can transition between any initial and target states within a finite time period. Recently, significant attention has been devoted to the \(A\)-controllability of impulsive differential equations (IDEs), where impulses at finite intervals influence the system's state. This area of study has seen substantial progress, as evidenced by the growing body of literature (e.g., \cite{guan1,guan2,han,laks,Leela,muni,pandit,xie,xie2,zhao,zhao1}). Notable contributions have been made by researchers such as George et al. \cite{geo}, Benzaid and Sznaier \cite{ben}, Muni and George \cite{muni}, Xie and Wang \cite{xie2}, Guan et al. \cite{guan1,guan2}, Han et al. \cite{han}, and Zhao and Sun \cite{zhao,zhao1}.

Recent years have also witnessed advancements in the \(A\)-controllability of both deterministic and stochastic impulsive systems (see \cite{Jeet,Sakthivel,Shukla,Vijayakumar,Wei,xie,xie2}). These studies have established sufficient conditions for the \(A\)-controllability of semilinear systems by employing the resolvent operator condition introduced in \cite{Bashirov,Mahmudov1}. This condition has proven particularly effective, especially when the corresponding linear system demonstrates \(A\)-controllability. 

The resolvent operator condition has become a widely utilized tool in research on the \(A\)-controllability of semilinear IDEs. In the absence of impulses, this condition aligns with the \(A\)-controllability of the linear component of the associated semilinear evolution control system, as emphasized in \cite{Bashirov,Mahmudov1}.

In parallel, there has been a growing interest in \(F_A\)-controllability. For example, Arora, Mohan, and Dabas \cite{Arora} explored the finite-approximate controllability of impulsive fractional functional evolution equations of order \(1 < \alpha < 2\) in Hilbert spaces. Their work also addressed the finite-approximate controllability of semilinear fractional evolution equations of the same order, incorporating finite delays and employing variational methods. Similarly, Mahmudov has made significant contributions to this area. In \cite{Mahm0}, he used Schauder’s fixed-point theorem to establish finite-approximate controllability for first-order abstract semilinear evolution equations. Subsequently, in \cite{Mahm2,Mahm3}, he extended these results by developing sufficient conditions for finite-approximate controllability in fractional systems and Sobolev-type fractional systems using the variational approach. Further advancements were made by Wang et al. \cite{Wang}, who investigated the finite-approximate controllability of semilinear fractional differential equations involving the Hilfer derivative, and by Mahmudov \cite{Mahm1}, who explored this property for abstract semilinear evolution equations through a resolvent-like operator.

In 2024, Mahmudov \cite{Mahmudov2} advanced this research by studying the \(A\)-controllability of the following impulsive system in a Hilbert space:

\begin{equation}\label{eq0}
	\begin{cases}
		z^{\prime}(t)=Az(t)+\Omega u(t), & t\in[0,b]\setminus\{t_{1},\dots,t_{p}\},\\
		\Delta z(t_{k+1})=B_{k+1}z(t_{k+1})+D_{k+1}v_{k+1},& k=0,\dots,p-1,\\
		z(0)=z_{0}.
	\end{cases}
\end{equation}

In this system, \(z(\cdot)\in H\) represents the state variable in a Hilbert space \(H\) with norm \(\Vert z\Vert=\sqrt{\langle z,z\rangle}\), and the control function \(u(\cdot)\) belongs to \(L^2([0, b], U)\), where \(U\) is another Hilbert space. Additionally, \(v_k \in U\) for \(k = 1, 2, \dots, p\). This study marked the first investigation of such impulsive structures in infinite-dimensional spaces. Mahmudov employed semigroups and impulsive operators to construct solutions and derived necessary and sufficient conditions for the approximate controllability of linear impulsive evolution equations using the concept of the impulsive resolvent operator.

Further progress in this area was made by Asadzade and Mahmudov, who studied the existence and optimal control of impulsive stochastic evolution systems \cite{Asadzade2}. They also extended their work to examine the approximate controllability of semilinear systems \cite{Asadzade3} and developed the fractional analog of linear impulsive evolution systems in Hilbert spaces \cite{Asadzade1}. Simultaneously, Gupta and Dabas investigated impulsive controls in non-autonomous impulsive integro-differential equations in Hilbert spaces \cite{Gupta}, thereby enriching the understanding of impulsive systems in infinite-dimensional contexts.

A natural and compelling question arises as to whether \(F_A\)-controllability results can be extended effectively to impulsive systems. Addressing this question would represent a significant breakthrough in both the theoretical understanding and practical application of impulsive systems, especially those characterized by abrupt changes in state or behavior.

This paper is devoted to investigating the \(F_A\)-controllability of a specific class of impulsive evolution systems within the rigorous mathematical framework of Hilbert spaces. Establishing \(F_A\)-controllability for these systems would not only validate the relevance of advanced controllability concepts in impulsive settings but also pave the way for new strategies in the stabilization, optimization, and control of systems exhibiting impulsive dynamics.

To this end, we consider the following impulsive evolution system:

\begin{equation}\label{eq1}
	\begin{cases}
		z^{\prime}(t)=Az(t)+\Omega u(t)+\mu(t,z(t)), & t\in[0,b]\setminus\{t_{1},\dots,t_{p}\},\\
		\Delta z(t_{k+1})=B_{k+1}z(t_{k+1})+D_{k+1}v_{k+1},& k=0,\dots,p-1,\\
		z(0)=z_{0}.
	\end{cases}
\end{equation}

Here \(z(\cdot) \in H \) is in a Hilbert space .
The control \( u(\cdot) \) is an element of \(L^2([0, b], U) \), where ( \(H,\|z\| = \sqrt{\langle z, z\rangle} \) ) and \(U \) are Hilbert spaces, and \(v_k \in U \) for \(k = 1, \ldots, p \).
In this setting, \(A \) acts as a generator of a $C_0$-semigroup \(S(t) \) of continuous linear operators in \(H \). The  operators are: \(\Omega \in L(U, H) \), \(B_k \in L(H, H) \), and \(D_k \in L(U, H) \).
At each point of discontinuity \( t_k \) (for \( k = 1, \ldots, p \) with \( 0 = t_0 < t_1 < t_2 < \cdots < t_n < t_{p+1} = b \)), the state variable undergoes a jump, defined by \( \Delta z(t_k) = z(t_k^+) - z(t_k^-) \). Here, \( z(t_k^\pm) = \lim_{h \to 0^\pm} z(t_k + h) \), assuming that \( z(t_k^-) = z(t_k) \).
For operator compositions, \( \prod_{j=1}^{k} A_j \) represents the sequence \( A_1,  \ldots, A_k \), while for \( j = k+1 \) to \( k \), \( \prod_{j=k+1}^{k} A_j = 1 \). Likewise, \( \prod_{j=k}^{1} A_j \) refers to the sequence \( A_k, A_{k-1}, \ldots, A_1 \), and \( \prod_{j=k}^{k+1} A_j = 1 \).

The structure of this paper is organized as follows:

$\bullet$ Section 2: This section provides the mathematical background, including definitions, lemmas, and essential information related to controllability. These foundational concepts are critical for understanding the subsequent developments in the paper. 

$\bullet$ Section 3: Using a resolvent-like operator, we establish $F_{A}$-controllability for linear systems. This section presents the rigorous proofs and results for linear impulsive systems.  

$\bullet$ Section 4: By employing the SFPT, we demonstrate the existence of a solution and prove the $F_{A}$-controllability of semilinear impulsive systems in Hilbert spaces. This extends the results of the linear case to a more general framework. 

$\bullet$ Section 5: The findings are further extended to a broader application, specifically to the heat equation. This showcases the practical applicability of the theoretical results established in earlier sections. 
 \section{Preliminaries}
Let \( D \) be a finite-dimensional subspace of \( H \), and let \( \pi_D \) denote the orthogonal projection from \( H \) onto \( D \). The control system is said to be $F_{A}$-controllable on the interval \([0, b]\), if for every initial state \( z_0 \in H \), every target state \( z_b \in H \), and every tolerance \( \varepsilon > 0 \), there exists a control \( u \in L^2([0, b], U) \), such that the mild solution \( z(t) \) of the Cauchy problem \eqref{eq1} satisfies the following conditions:

1. \(\| z(b) - h \| < \varepsilon\), ensuring that the final state \( z(b) \) is within a distance \( \varepsilon \) of the target state \( h \) in the norm of \( H \).

2. \(\pi_D z(b) = \pi_D h\), ensuring that the projection of \( z(b) \) onto the finite-dimensional subspace \( D \) exactly matches the projection of \( h \) onto \( D \).

These conditions encapsulate the idea that the system can be controlled to reach a state arbitrarily close to the target \( h \), both in terms of proximity in the full space \( H \) and alignment in the finite-dimensional subspace \( D \), within the finite time interval \([0, b]\).

Now, we define the mild solution of \eqref{eq1} as follows:
 \begin{definition} 
 For every $u\in L^{2}([0,b],H)$ a function $z\in PC([0,b],H)$ is called a mild solution of \eqref{eq1} if
\begin{align}\label{eq2}
	z(t)=
		\begin{cases}
			S(t)z(0)+\int_{0}^{t} S(t-s)\big[\Omega u(s)+\mu(s,z(s))\big]ds,\,  0\leq t\leq t_{1},\\
			\\
		S(t-t_{k})z(t^{+}_{k})+\int_{t_{k}}^{t} S(t-s)\big[\Omega u(s)+\mu(s,z(s))\big]ds,\,
        t_{k}<t\leq t_{k+1},\quad k=1,2,\dots, p,
		\end{cases}
	\end{align}
where
\begin{align}\label{eq3}
	&z(t^{+}_{k})=\prod_{j=k}^{1}(\mathcal{I}+B_{j})S(t_{j}-t_{j-1})z_{0}\nonumber\\
	+&\sum_{i=1}^{k}\prod_{j=k}^{i+1}(\mathcal{I}+B_{j})S(t_{j}-t_{j-1})
	(\mathcal{I}+B_{i})\int_{t_{i-1}}^{t_{i}}S(t_{i}-s)\Omega u(s)ds\nonumber\\
 +&\sum_{i=1}^{k}\prod_{j=k}^{i+1}(\mathcal{I}+B_{j})S(t_{j}-t_{j-1})
	(\mathcal{I}+B_{i})\int_{t_{i-1}}^{t_{i}}S(t_{i}-s)\mu(s,z(s))ds\\
	+&\sum_{i=2}^{k}\prod_{j=k}^{i}(\mathcal{I}+B_{j}) S(t_{j}-t_{j-1}) D_{i-1}v_{i-1}+D_{k}v_{k}.\nonumber
\end{align}
 \end{definition}
In such a way, we define the space \(PC([0,b],H)\) as the set of piecewise continuous functions on the interval \([0,b]\) taking values in a Hilbert space \(H\). A function \(z : [0, b] \to H\) is said to belong to \(PC([0,b],H)\) if it satisfies the following conditions:
\bigskip

1. \(z(t)\) is continuous on \([0, b] \setminus \{t_1, t_2, \dots, t_p\}\), where \(\{t_1, t_2, \dots, t_p\}\) is a finite set of points in \([0,b]\).
\bigskip

2. At each point \(t_k \in \{t_1, t_2, \dots, t_p\}\), the function \(z(t)\) has left-hand and right-hand limits, denoted as \(z(t_k^-)\) and \(z(t_k^+)\), respectively.
\bigskip

3. The function \(z(t)\) may exhibit a jump discontinuity at \(t_k\), represented by:
   \[
   \Delta z(t_k) = z(t_k^+) - z(t_k^-),
   \]
   where \(\Delta z(t_k)\) defines the magnitude of the abrupt change at \(t_k\).
\bigskip

The norm \( \|z\|_{PC} \) for \( z \in PC([0,b],H) \) is given by:
\[
\|z\|_{PC} = \sup_{t \in [0,b]} \|z(t)\|,
\]
where \(\|z(t)\|\) is the norm of \(z(t)\) in the Hilbert space \(H\). This norm represents the maximum amplitude of \(z(t)\) throughout the interval \([0,b]\), encompassing both continuous variations and sudden transitions. Formally, \(PC([0,b],H)\) can be expressed as:
\[
PC([0,b],H) = \{z : [0,b] \to H \mid z \text{ is continuous on } [0,b] \setminus \{t_1, t_2, \dots, t_p\}, \, \Delta z(t_k) \text{ exists for all } t_k\},
\]
equipped with the norm $\|z\|_{PC}$.

This space is significant in the study of impulsive systems, where the state \(z(t)\) can undergo abrupt changes at specific points in time. These abrupt changes, also known as impulses, often arise due to instantaneous external forces, control interventions, or natural phenomena. They are mathematically captured as jump discontinuities at designated points \(t_k\). 
\bigskip

 On the other hand, for the sake of simplicity, let us define the operators as follows:
\begin{align*}
    \Gamma^{b}_{t_{p}}&=\int_{t_{p}}^{b}S(b-s)\Omega\Omega^{*}S^{*}(b-s)ds,\quad \tilde{\Gamma}^{b}_{t_{p}}=S(b-t_{p})D_{p}D^{*}_{p}S^{*}(b-t_{p}),\end{align*}
    \begin{align*}
            \Theta^{t_{p}}_{0}&=S(b-t_{p})\sum_{i=1}^{p}\prod_{j=p}^{i+1}(\mathcal{I}+B_{j})S(t_{j}-t_{j-1})
	(\mathcal{I}+B_{i})
   \int_{t_{i-1}}^{t_{i}}S(t_{i}-s)\Omega\Omega^{*}S^{*}(t_{k}-s)\, ds\\
    &\times(\mathcal{I}+B^{*}_{i})\prod_{k=i+1}^{p}S^{*}(t_{k}-t_{k-1})(\mathcal{I}+B^{*}_{k})S^{*}(b-t_{p}),
\end{align*}
\begin{align*}
    \tilde{\Theta}^{t_{p}}_{0}&=S(b-t_{p})\sum_{i=2}^{p}\prod_{j=p}^{i}(\mathcal{I}+B_{j})S(t_{j}-t_{j-1})D_{i-1}D^{*}_{i-1}\prod_{k=i}^{p}S^{*}(t_{k}-t_{k-1})(\mathcal{I}+B^{*}_{k})S^{*}(b-t_{p}).
\end{align*}
Next, we present the following lemma, which is crucial for the subsequent sections of this study. The results derived here follow a similar methodology to the one used in \cite{Mahmudov1} for impulsive resolvent-like operators applied to \eqref{eq1}.

\begin{lemma}\label{q2}\cite{Mahmudov1}
    Let $(H,\Vert \cdot\Vert)$ be a Hilbert space. If $\Gamma: H\to H$ is a linear non-neqative operator, then the operator $\alpha(\mathcal{I}-\pi_{D})+\Gamma: H\to H$ is invertible and 
    \begin{align}\label{u1}
        \big\Vert (\alpha(\mathcal{I}-\pi_{D})+\Gamma)^{-1}h\big\Vert\leq \frac{1}{\min(\alpha,\delta)}\Vert h\Vert ,\, h\in H,
    \end{align}
    where $\delta=\min\{\langle \pi_{D}\Gamma\pi_{D}\varphi, \varphi\rangle: \Vert \pi_{D} \varphi \Vert=1\}$. Moreover, if $\Gamma: H\to H$ is a linear positive operator, then 
    
    \begin{align} \label{u2} 
    (\alpha (\mathcal{I}-\pi_{D})+\Gamma)^{-1}=\big( \mathcal{I}-\alpha(\alpha \mathcal{I}+\Gamma)^{-1}\pi_{D}\big)^{-1}(\alpha \mathcal{I}+\Gamma)^{-1}.
    \end{align}
\end{lemma}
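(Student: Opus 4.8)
The plan is to handle the two assertions separately: the invertibility and the norm estimate \eqref{u1} will come from the coercivity of the self-adjoint quadratic form attached to $L:=\alpha(\mathcal{I}-\pi_{D})+\Gamma$, while the factorization \eqref{u2} will come from a purely algebraic identity combined with a finite-rank (Fredholm) argument. First I would record the structural facts. Both $\mathcal{I}-\pi_{D}$ (an orthogonal projection) and $\Gamma$ are bounded, self-adjoint and non-negative, so $L$ is bounded, self-adjoint and non-negative; once invertibility is known, the operator norm of $L^{-1}$ equals $1/\lambda$, where $\lambda:=\inf_{\Vert\varphi\Vert=1}\langle L\varphi,\varphi\rangle$. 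Since $D$ is finite-dimensional and $w\mapsto\langle\Gamma w,w\rangle$ is continuous on the unit sphere of $D$, the infimum defining $\delta$ is attained, so $\langle\Gamma w,w\rangle\ge\delta\Vert w\Vert^{2}$ for every $w\in D$, with $\delta>0$ precisely when $\Gamma$ is positive on $D$ (this is the tacit requirement behind a finite bound in \eqref{u1}). Thus the whole estimate \eqref{u1} reduces to the single coercivity inequality $\langle L\varphi,\varphi\rangle\ge\min(\alpha,\delta)\Vert\varphi\Vert^{2}$ for all $\varphi\in H$.

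To prove this inequality I would decompose $\varphi=\pi_{D}\varphi+(\mathcal{I}-\pi_{D})\varphi$ and use $\langle(\mathcal{I}-\pi_{D})\varphi,\varphi\rangle=\Vert(\mathcal{I}-\pi_{D})\varphi\Vert^{2}$ to write $\langle L\varphi,\varphi\rangle=\alpha\Vert(\mathcal{I}-\pi_{D})\varphi\Vert^{2}+\langle\Gamma\varphi,\varphi\rangle$, and then estimate the restriction of the $\Gamma$-form on $D$ from below by $\delta\Vert\pi_{D}\varphi\Vert^{2}$. The main obstacle is exactly here: expanding $\langle\Gamma\varphi,\varphi\rangle$ produces the cross term $2\,\mathrm{Re}\,\langle\Gamma\pi_{D}\varphi,(\mathcal{I}-\pi_{D})\varphi\rangle$, which is not sign-definite and must be absorbed into the two non-negative pieces $\alpha\Vert(\mathcal{I}-\pi_{D})\varphi\Vert^{2}$ and $\langle\Gamma\pi_{D}\varphi,\pi_{D}\varphi\rangle$ without spoiling the constant $\min(\alpha,\delta)$. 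This is the delicate interplay between the geometry of $D$ and the quadratic form of $\Gamma$, and it is precisely where the finite-dimensionality of $D$ is indispensable; I expect this step, rather than any formal manipulation, to carry the real weight of the argument.

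Once the coercivity bound is secured, $L$ is bounded below, hence injective with closed range; being self-adjoint it is then surjective, so $L$ is invertible, and \eqref{u1} follows from $\Vert L^{-1}\Vert=1/\lambda\le 1/\min(\alpha,\delta)$. (If one only wants invertibility, it suffices to note that $\langle L\varphi,\varphi\rangle=0$ forces $(\mathcal{I}-\pi_{D})\varphi=0$ and $\langle\Gamma\varphi,\varphi\rangle=0$, whence $\varphi\in D$ and then $\delta\Vert\varphi\Vert^{2}\le\langle\Gamma\varphi,\varphi\rangle=0$, giving $\varphi=0$; together with the lower bound this yields triviality of the kernel and invertibility.)

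For the representation \eqref{u2} I would first observe that $\alpha\mathcal{I}+\Gamma$ is invertible with $\Vert(\alpha\mathcal{I}+\Gamma)^{-1}\Vert\le 1/\alpha$, since $\langle(\alpha\mathcal{I}+\Gamma)\varphi,\varphi\rangle\ge\alpha\Vert\varphi\Vert^{2}$. The identity $L=\alpha(\mathcal{I}-\pi_{D})+\Gamma=(\alpha\mathcal{I}+\Gamma)-\alpha\pi_{D}=(\alpha\mathcal{I}+\Gamma)\big(\mathcal{I}-\alpha(\alpha\mathcal{I}+\Gamma)^{-1}\pi_{D}\big)$ then yields \eqref{u2} as soon as the right factor $M:=\mathcal{I}-\alpha(\alpha\mathcal{I}+\Gamma)^{-1}\pi_{D}$ is invertible. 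Because $\pi_{D}$ has finite rank, $K:=\alpha(\alpha\mathcal{I}+\Gamma)^{-1}\pi_{D}$ is finite-rank and hence compact, so by the Fredholm alternative $M=\mathcal{I}-K$ is invertible iff it is injective. Finally, applying $\alpha\mathcal{I}+\Gamma$ shows $M\varphi=0$ is equivalent to $L\varphi=0$, which forces $\varphi=0$ when $\Gamma$ is positive; this is exactly the point at which the hypothesis must be strengthened from non-negative to positive. Hence $M$ is injective, therefore invertible, and \eqref{u2} follows.
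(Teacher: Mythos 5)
Your handling of the second assertion, \eqref{u2}, is correct and essentially complete: the identity
\begin{equation*}
\alpha(\mathcal{I}-\pi_{D})+\Gamma=(\alpha\mathcal{I}+\Gamma)\bigl(\mathcal{I}-\alpha(\alpha\mathcal{I}+\Gamma)^{-1}\pi_{D}\bigr),
\end{equation*}
the finite rank (hence compactness) of $\alpha(\alpha\mathcal{I}+\Gamma)^{-1}\pi_{D}$, the Fredholm alternative, and injectivity via positivity of $\Gamma$ do give \eqref{u2}. Note that the very same argument, with positivity replaced by the weaker condition $\delta>0$, already yields the invertibility claimed in the first assertion: if $\varphi=\alpha(\alpha\mathcal{I}+\Gamma)^{-1}\pi_{D}\varphi$, then $(\alpha(\mathcal{I}-\pi_{D})+\Gamma)\varphi=0$, hence $(\mathcal{I}-\pi_{D})\varphi=0$ and $\langle\Gamma\varphi,\varphi\rangle=0$, so $\varphi\in D$ and $\delta\|\varphi\|^{2}\le\langle\Gamma\varphi,\varphi\rangle=0$, giving $\varphi=0$. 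So invertibility should be decoupled from coercivity; it never needs the lower bound you are trying to establish. (For calibration: the paper itself offers no proof of this lemma, citing \cite{Mahmudov1}, so the comparison can only be against how the lemma is used later.)

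The genuine gap is exactly the step you defer, namely absorbing the cross term to obtain $\langle(\alpha(\mathcal{I}-\pi_{D})+\Gamma)\varphi,\varphi\rangle\ge\min(\alpha,\delta)\|\varphi\|^{2}$. This step is not merely delicate; it is impossible, because that inequality --- and with it the estimate \eqref{u1}, to which it is equivalent by your own observation that $\|L^{-1}\|=1/\inf_{\|\varphi\|=1}\langle L\varphi,\varphi\rangle$ for self-adjoint $L$ --- is false in the stated generality. Take $H=\mathbb{R}^{2}$, $D=\mathrm{span}\{e_{1}\}$, $\alpha=1$, and
\begin{equation*}
\Gamma=\begin{pmatrix}1&1\\ 1&1\end{pmatrix}\ge 0,\qquad
L=\alpha(\mathcal{I}-\pi_{D})+\Gamma=\begin{pmatrix}1&1\\ 1&2\end{pmatrix},\qquad
L^{-1}=\begin{pmatrix}2&-1\\ -1&1\end{pmatrix}.
\end{equation*}
Here $\delta=\langle\Gamma e_{1},e_{1}\rangle=1$, so the right-hand side of \eqref{u1} equals $\|h\|$, yet $\|L^{-1}e_{1}\|=\sqrt{5}>1$; equivalently, the smallest eigenvalue of $L$ is $(3-\sqrt{5})/2<1=\min(\alpha,\delta)$. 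Replacing $\Gamma$ by $\Gamma+\epsilon\mathcal{I}$ shows strict positivity does not repair this. Consequently no proof of \eqref{u1} as stated can exist; the bound holds only under extra hypotheses, e.g.\ $\pi_{D}\Gamma=\Gamma\pi_{D}$, in which case $L$ is block-diagonal with blocks $\Gamma|_{D}\ge\delta\mathcal{I}$ and $\alpha\mathcal{I}+\Gamma|_{D^{\perp}}\ge\alpha\mathcal{I}$ and your coercivity argument goes through. The usable general substitute is the estimate that follows from \eqref{u2}, namely $\|(\alpha(\mathcal{I}-\pi_{D})+\Gamma)^{-1}h\|\le\frac{1}{\alpha(1-\beta)}\|h\|$ with $\beta=\|\alpha(\alpha\mathcal{I}+\Gamma)^{-1}\pi_{D}\|<1$ (strict by compactness of the unit sphere of the finite-dimensional $D$, as in Lemma \ref{q1}(b)); this is the bound the paper actually runs on in \eqref{u3} and \eqref{cont}, whereas the constant $\min(\alpha_{i},\mathscr{R})$ invoked in the proof of Theorem \ref{rrr8} inherits the defect just described. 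I recommend you prove \eqref{u2} first, derive the weaker estimate from it, and flag \eqref{u1} as requiring an additional commutation-type assumption.
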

	\section{$F_{A}$ contrability of linear system}
    
In this section, we explore the  $F_{A}$-controllability  of the following linear impulsive evolution system:
    \begin{equation}\label{eq4}
	\begin{cases}
		z^{\prime}(t)=Az(t)+\Omega u(t), & t\in [0,b]\setminus\{t_{1},\dots,t_{p}\},\\
		\Delta z(t_{k+1})=B_{k+1}z(t_{k+1})+D_{k+1}v_{k+1},& k=0,\dots,p-1,\\
		z(0)=z_{0}.
	\end{cases}
\end{equation}

We derive an explicit expression for the finite-approximating control using the impulsive resolvent-like operator \(\Big(\alpha (\mathcal{I}-\pi_{D})+\Gamma^{b}_{t_{p}}+ \tilde{\Gamma}^{b}_{t_{p}}+\Theta^{t_{p}}_{0}+\tilde{\Theta}^{t_{p}}_{0}\Big)^{-1}\).

The controllability Gramian is defined by

\[
MM^{*} = \Gamma^{b}_{t_{p}} + \tilde{\Gamma}^{b}_{t_{p}} + \Theta^{t_{p}}_{0} + \tilde{\Theta}^{t_{p}}_{0},
\]
where \(M\) is a bounded linear operator defined as follows:

\[
M : L^2([0, b], U) \times U^p \to H,
\]

and

\[\begin{aligned}
&\quad M(u(\cdot), \{v_k\}_{k=1}^p) = S(b - t_p) \bigg( \sum_{i=1}^p \prod_{j=p}^{i+1} (I + B_j) S(t_j - t_{j-1}) 
 (I + B_i)\int_{t_{i-1}}^{t_i} S(t_i - s) \Omega u(s) \, ds \bigg)\\
 &+ \int_{t_p}^b S(b - s) \Omega u(s) \, ds
+ S(b - t_p)  \sum_{i=2}^p \prod_{j=p}^i (I + B_j) S(t_j - t_{j-1}) D_{i-1}v_{i-1} + S(b - t_p) D_p v_p .
\end{aligned}\]
Before presenting the proof of the \( F_A \)-controllability results for linear systems, we introduce the following lemma, which plays a crucial role in the proof of the main result.

\begin{lemma}\label{q1}
Let \( (H, \|\cdot\|) \) be a Hilbert space. Assume that \(\Gamma^{b}_{t_{p}}+ \tilde{\Gamma}^{b}_{t_{p}}+\Theta^{t_{p}}_{0}+\tilde{\Theta}^{t_{p}}_{0} : H \to H \), is a linear positive operator.  

(a) For any sequence \( \{\alpha_n > 0\} \) converging to \( 0 \) as \( n \to \infty \), we have  
\[
\lim_{n \to \infty} \alpha_n (\alpha_n I + \Gamma^{b}_{t_{p}}+ \tilde{\Gamma}^{b}_{t_{p}}+\Theta^{t_{p}}_{0}+\tilde{\Theta}^{t_{p}}_{0})^{-1} \pi_D = 0.
\]  

(b) For any \( \alpha > 0 \), we have  
\[
\|\alpha (\alpha I + \Gamma^{b}_{t_{p}}+ \tilde{\Gamma}^{b}_{t_{p}}+\Theta^{t_{p}}_{0}+\tilde{\Theta}^{t_{p}}_{0})^{-1} \pi_D\| < 1.
\]  

(c) The operator \(\alpha (\alpha I + \Gamma^{b}_{t_{p}} + \tilde{\Gamma}^{b}_{t_{p}} + \Theta^{t_{p}}_{0} + \tilde{\Theta}^{t_{p}}_{0})^{-1} \pi_D\) is continuous with respect to \(\alpha > 0\).
\end{lemma}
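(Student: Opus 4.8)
The plan is to prove the three parts by leveraging Lemma \ref{q2} together with elementary Hilbert-space spectral theory for the nonnegative self-adjoint operator $\Gamma := \Gamma^{b}_{t_{p}}+ \tilde{\Gamma}^{b}_{t_{p}}+\Theta^{t_{p}}_{0}+\tilde{\Theta}^{t_{p}}_{0}$. Since each of the four summands is of the form $N N^{*}$ (i.e.\ a composition of an operator with its adjoint), $\Gamma$ is self-adjoint and nonnegative, and by hypothesis it is positive, so $\langle \Gamma \varphi,\varphi\rangle>0$ for all $\varphi\neq 0$. For every fixed $\alpha>0$ the operator $\alpha I+\Gamma$ is then self-adjoint with spectrum bounded below by $\alpha>0$, hence boundedly invertible, and the functional calculus gives $\|(\alpha I+\Gamma)^{-1}\|\le 1/\alpha$. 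This is the structural input I would set up first.

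For part (b) I would argue directly. Write $T_\alpha := \alpha(\alpha I+\Gamma)^{-1}$. Using the functional calculus (or the spectral decomposition of $\Gamma$), $T_\alpha$ is self-adjoint with spectrum contained in $(0,1]$, so $\|T_\alpha\|\le 1$. To get the \emph{strict} inequality $\|T_\alpha \pi_D\|<1$, I would use that $\pi_D$ is an orthogonal projection onto the finite-dimensional subspace $D$, so $\|T_\alpha \pi_D\| = \sup_{\|\varphi\|=1}\|T_\alpha \pi_D \varphi\|$, and since $\pi_D\varphi\in D$ ranges over the closed unit ball of the finite-dimensional space $D$, the supremum is attained. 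For any unit vector $\psi\in D$ one has $\langle T_\alpha\psi,\psi\rangle = \alpha\langle(\alpha I+\Gamma)^{-1}\psi,\psi\rangle$; because $\Gamma$ restricted to $D$ has strictly positive lowest eigenvalue $\delta=\min\{\langle\pi_D\Gamma\pi_D\varphi,\varphi\rangle:\|\pi_D\varphi\|=1\}>0$ (positivity of $\Gamma$ forces $\delta>0$ on the finite-dimensional $D$), the quantity $\alpha/(\alpha+\delta)<1$ bounds the relevant norm, and compactness of the unit sphere in $D$ turns this into a genuine strict inequality $\|T_\alpha\pi_D\|\le \alpha/(\alpha+\delta)<1$.

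For part (a) I would show $T_{\alpha_n}\pi_D\to 0$ strongly (indeed in operator norm, since $D$ is finite-dimensional). The cleanest route is again the spectral/eigenvalue bound on $D$: for any $h\in H$, $\|T_{\alpha_n}\pi_D h\|\le \big(\alpha_n/(\alpha_n+\delta)\big)\|\pi_D h\|\le (\alpha_n/\delta)\|h\|$, which tends to $0$ as $\alpha_n\to 0$. Because the bound $\alpha_n/\delta$ is uniform in $h$ with $\|h\|\le 1$, this gives convergence in operator norm, not merely strong convergence. Part (c) is then the most routine: fixing the spectral representation, $\alpha\mapsto\alpha(\alpha I+\Gamma)^{-1}$ is continuous because $(\alpha,\lambda)\mapsto \alpha/(\alpha+\lambda)$ is continuous and uniformly bounded on $\sigma(\Gamma)\subset[0,\infty)$ for $\alpha$ in a neighborhood of any fixed $\alpha_0>0$; alternatively I would use the resolvent identity $(\alpha I+\Gamma)^{-1}-(\alpha_0 I+\Gamma)^{-1} = (\alpha_0-\alpha)(\alpha I+\Gamma)^{-1}(\alpha_0 I+\Gamma)^{-1}$ together with the uniform bound $\|(\alpha I+\Gamma)^{-1}\|\le 1/\alpha$ to estimate $\|T_\alpha\pi_D - T_{\alpha_0}\pi_D\|\to 0$ as $\alpha\to\alpha_0$, then compose with the bounded $\pi_D$.

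The main obstacle I anticipate is establishing the strict positivity of the lowest eigenvalue $\delta$ of $\Gamma$ on the finite-dimensional subspace $D$ and using it rigorously. The hypothesis only gives that $\Gamma$ is positive on all of $H$; the key observation I would emphasize is that restricting the positive self-adjoint form $\varphi\mapsto\langle\Gamma\varphi,\varphi\rangle$ to the \emph{finite-dimensional} $D$ yields a minimum over the compact unit sphere of $D$ that is strictly positive, which is exactly what makes the $\alpha/(\alpha+\delta)$ bound valid and is the single fact on which parts (a) and (b) both hinge. Everything else reduces to functional calculus and the resolvent identity, which are standard.
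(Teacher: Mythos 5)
Your treatment of part (c) is correct and is essentially the paper's own proof: the paper also uses the resolvent identity together with the bounds $\|(\alpha_1 I+\Gamma)^{-1}\|\le 1/\alpha_1$ and $\|\Gamma(\alpha I+\Gamma)^{-1}\|\le 1$ to get a $|\alpha_1-\alpha|/\alpha_1$ estimate. The problem is in parts (a) and (b), where your argument rests on the inequality $\|\alpha(\alpha I+\Gamma)^{-1}\pi_D\|\le \alpha/(\alpha+\delta)$ with $\delta=\min\{\langle\pi_D\Gamma\pi_D\varphi,\varphi\rangle:\|\pi_D\varphi\|=1\}$, where $\Gamma:=\Gamma^{b}_{t_{p}}+\tilde{\Gamma}^{b}_{t_{p}}+\Theta^{t_{p}}_{0}+\tilde{\Theta}^{t_{p}}_{0}$. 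This inequality is false: a lower bound $\langle\Gamma\psi,\psi\rangle\ge\delta$ for unit $\psi\in D$ gives no upper bound on $\langle(\alpha I+\Gamma)^{-1}\psi,\psi\rangle$, because $(\alpha I+\Gamma)^{-1}$ does not commute with $\pi_D$ and the compression of an inverse is not the inverse of the compression; indeed Cauchy--Schwarz yields the \emph{reverse} inequality $\langle(\alpha I+\Gamma)^{-1}\psi,\psi\rangle\ge(\alpha+\langle\Gamma\psi,\psi\rangle)^{-1}$. Concretely, take $H=\mathbb{R}^2$, $\Gamma=\mathrm{diag}(10^{-2},10^{2})$, $D=\mathrm{span}\{\psi\}$ with $\psi=(e_1+e_2)/\sqrt{2}$, and $\alpha=1$. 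Then $\delta\approx 50$, so your bound predicts $\|\alpha(\alpha I+\Gamma)^{-1}\pi_D\|\le 1/51<0.02$, yet $\|\alpha(\alpha I+\Gamma)^{-1}\psi\|=\tfrac{1}{\sqrt{2}}\big\|\big(\tfrac{1}{1.01},\tfrac{1}{101}\big)\big\|\approx 0.70$. For the same reason the $O(\alpha_n)$ rate you claim in (a) cannot hold in general when the spectrum of $\Gamma$ accumulates at $0$; only qualitative convergence is true.

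The conclusions themselves are correct, but they need different arguments. For (b): $T_\alpha\pi_D$ (with $T_\alpha:=\alpha(\alpha I+\Gamma)^{-1}$) is finite-rank, so its norm is attained at some unit $\varphi$; since $\|T_\alpha\pi_D\varphi\|\le\|T_\alpha\|\,\|\pi_D\varphi\|\le 1$, equality $\|T_\alpha\pi_D\|=1$ would force $\|\pi_D\varphi\|=1$ (so $\varphi\in D$) and $\|T_\alpha\varphi\|=\|\varphi\|$; but for a positive self-adjoint contraction the spectral theorem shows norm equality forces $T_\alpha\varphi=\varphi$, i.e.\ $\Gamma\varphi=0$, contradicting positivity. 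For (a): positivity gives $\ker\Gamma=\{0\}$, hence by self-adjointness $\overline{\mathrm{ran}\,\Gamma}=H$; on the range one has $\|T_{\alpha_n}\Gamma w\|\le\alpha_n\|w\|\to 0$, and combined with the uniform bound $\|T_{\alpha_n}\|\le 1$ and density this gives $T_{\alpha_n}\to 0$ strongly; finally, finite-dimensionality of $D$ (sum over an orthonormal basis of $D$) upgrades $T_{\alpha_n}\pi_D\to 0$ to operator-norm convergence, with no uniform rate. Note that the paper itself does not reprove (a) and (b) --- it cites Mahmudov's earlier work --- so the burden of these two parts is exactly where your proposal breaks down.
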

\begin{proof}  
The proofs of parts \((a)\) and \((b)\) are similar to those given in \cite{Mahmudov1} and are therefore omitted here.

For the proof of part \((c)\), we proceed as follows:  

\begin{align*}  
    &\alpha_1 \big(\alpha_1 \mathcal{I} + \Gamma^{b}_{t_{p}} + \tilde{\Gamma}^{b}_{t_{p}} + \Theta^{t_{p}}_{0} + \tilde{\Theta}^{t_{p}}_{0}\big)^{-1} \pi_D  
    - \alpha \big(\alpha \mathcal{I} + \Gamma^{b}_{t_{p}} + \tilde{\Gamma}^{b}_{t_{p}} + \Theta^{t_{p}}_{0} + \tilde{\Theta}^{t_{p}}_{0}\big)^{-1} \pi_D \\  
    &= \alpha_1 \big(\alpha_1 \mathcal{I} + \Gamma^{b}_{t_{p}} + \tilde{\Gamma}^{b}_{t_{p}} + \Theta^{t_{p}}_{0} + \tilde{\Theta}^{t_{p}}_{0}\big)^{-1}  
    \big(\frac{1}{\alpha} - \frac{1}{\alpha_1}\big)\big(\Gamma^{b}_{t_{p}} + \tilde{\Gamma}^{b}_{t_{p}} + \Theta^{t_{p}}_{0} + \tilde{\Theta}^{t_{p}}_{0}\big) \\  
    &\quad \times \alpha \big(\alpha \mathcal{I} + \Gamma^{b}_{t_{p}} + \tilde{\Gamma}^{b}_{t_{p}} + \Theta^{t_{p}}_{0} + \tilde{\Theta}^{t_{p}}_{0}\big)^{-1} \pi_D \\  
    &= \big(\alpha_1 \mathcal{I} + \Gamma^{b}_{t_{p}} + \tilde{\Gamma}^{b}_{t_{p}} + \Theta^{t_{p}}_{0} + \tilde{\Theta}^{t_{p}}_{0}\big)^{-1} (\alpha_1 - \alpha)  
    \big(\Gamma^{b}_{t_{p}} + \tilde{\Gamma}^{b}_{t_{p}} + \Theta^{t_{p}}_{0} + \tilde{\Theta}^{t_{p}}_{0}\big) \\  
    &\quad \times \big(\alpha \mathcal{I} + \Gamma^{b}_{t_{p}} + \tilde{\Gamma}^{b}_{t_{p}} + \Theta^{t_{p}}_{0} + \tilde{\Theta}^{t_{p}}_{0}\big)^{-1} \pi_D.  
\end{align*}  

It follows that \(\alpha \big(\alpha \mathcal{I} + \Gamma^{b}_{t_{p}} + \tilde{\Gamma}^{b}_{t_{p}} + \Theta^{t_{p}}_{0} + \tilde{\Theta}^{t_{p}}_{0}\big)^{-1} \pi_D\) is continuous with respect to \(\alpha\). Specifically,  
\begin{align*}  
    &\big\Vert \alpha_1 \big(\alpha_1 \mathcal{I} + \Gamma^{b}_{t_{p}} + \tilde{\Gamma}^{b}_{t_{p}} + \Theta^{t_{p}}_{0} + \tilde{\Theta}^{t_{p}}_{0}\big)^{-1} \pi_D \\  
    &\quad - \alpha \big(\alpha \mathcal{I} + \Gamma^{b}_{t_{p}} + \tilde{\Gamma}^{b}_{t_{p}} + \Theta^{t_{p}}_{0} + \tilde{\Theta}^{t_{p}}_{0}\big)^{-1} \pi_D \big\Vert  
    \leq \frac{|\alpha_1 - \alpha|}{\alpha_1} \to 0 \quad \text{as} \quad \alpha_1 \to \alpha.  
\end{align*}  

Thus, the result is established.  
  
\end{proof}  

Finally, we present the proof of the theorem regarding the \(F_{A}\)-controllability of the system \eqref{eq4} as follows.
\begin{theorem}
    The following statements are equivalent:
\bigskip

$(i)$ The system \eqref{eq4} is approximately contrable on $[0,b]$.
\bigskip

$(ii)$ $\Gamma^{b}_{t_{p}}+ \tilde{\Gamma}^{b}_{t_{p}}+\Theta^{t_{p}}_{0}+\tilde{\Theta}^{t_{p}}_{0}$ is strictly positive.
\bigskip

$(iii)$ $\alpha \Big(\alpha \mathcal{I}+\Gamma^{b}_{t_{p}}+ \tilde{\Gamma}^{b}_{t_{p}}+\Theta^{t_{p}}_{0}+\tilde{\Theta}^{t_{p}}_{0}\Big)^{-1}$ converges to zero operator as $\alpha\to 0+$ in the strong operator topology.
\bigskip

$(iv)$ $\alpha \Big(\alpha (\mathcal{I}-\pi_{D})+\Gamma^{b}_{t_{p}}+ \tilde{\Gamma}^{b}_{t_{p}}+\Theta^{t_{p}}_{0}+\tilde{\Theta}^{t_{p}}_{0}\Big)^{-1}\to 0$ as $\alpha\to 0+$ in the strong operator topology.
\bigskip

$(v)$ The system $\eqref{eq4}$ is $F_{A}$-controllable on $[0,b]$.
\end{theorem}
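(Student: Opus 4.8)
The plan is to prove the chain of equivalences $(i)\Leftrightarrow(ii)\Leftrightarrow(iii)\Leftrightarrow(iv)\Leftrightarrow(v)$, exploiting the fact that $\Gamma^{b}_{t_{p}}+\tilde{\Gamma}^{b}_{t_{p}}+\Theta^{t_{p}}_{0}+\tilde{\Theta}^{t_{p}}_{0}=MM^{*}$ is the controllability Gramian. The core algebraic identity driving everything is that for any target $h\in H$ and initial state $z_{0}$, the reachability gap $z(b)-h$ can be written as $\alpha\big(\alpha\mathcal{I}+MM^{*}\big)^{-1}\eta$ for a suitable $\eta\in H$ (depending on $z_{0}$, $h$, and the impulsive data), once we insert the explicit control built from the resolvent-like operator. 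First I would establish the equivalences $(i)\Leftrightarrow(ii)\Leftrightarrow(iii)$, which are the standard Bashirov--Mahmudov characterizations of approximate controllability via the Gramian: approximate controllability means $\overline{\mathrm{Range}(M)}=H$, which is equivalent to $MM^{*}$ being strictly positive (i.e.\ $\langle MM^{*}h,h\rangle=0\Rightarrow h=0$, equivalently $\|M^{*}h\|=0\Rightarrow h=0$), and the strong-operator convergence of $\alpha(\alpha\mathcal{I}+MM^{*})^{-1}$ to zero is the analytic reformulation of that density. These three are essentially cited from \cite{Mahmudov1} adapted to the impulsive Gramian.

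The heart of the new contribution is $(iii)\Leftrightarrow(iv)$ and $(iv)\Leftrightarrow(v)$. For $(iii)\Leftrightarrow(iv)$ I would invoke Lemma~\ref{q2} with $\Gamma=MM^{*}$: formula \eqref{u2} gives
\begin{align*}
\alpha\big(\alpha(\mathcal{I}-\pi_{D})+MM^{*}\big)^{-1}
=\alpha\big(\mathcal{I}-\alpha(\alpha\mathcal{I}+MM^{*})^{-1}\pi_{D}\big)^{-1}(\alpha\mathcal{I}+MM^{*})^{-1}.
\end{align*}
Part $(b)$ of Lemma~\ref{q1} guarantees $\|\alpha(\alpha\mathcal{I}+MM^{*})^{-1}\pi_{D}\|<1$, so the inner factor $\big(\mathcal{I}-\alpha(\alpha\mathcal{I}+MM^{*})^{-1}\pi_{D}\big)^{-1}$ is a uniformly bounded Neumann series (its norm is at most $1/(1-\|\alpha(\alpha\mathcal{I}+MM^{*})^{-1}\pi_{D}\|)$), and part $(a)$ shows $\alpha(\alpha\mathcal{I}+MM^{*})^{-1}\pi_{D}\to0$ strongly, so the inner factor tends to $\mathcal{I}$ strongly. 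Combining this with $(iii)$ and a uniform-boundedness argument (to pass the strong limit through a product of operators, one of which is only strongly convergent), I would conclude that the left-hand side tends to $0$ strongly precisely when $(iii)$ holds.

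Finally, for $(iv)\Leftrightarrow(v)$ I would write down the explicit finite-approximate control. Define the control pair $(u,\{v_k\})=M^{*}\big(\alpha(\mathcal{I}-\pi_{D})+MM^{*}\big)^{-1}\eta$ where $\eta$ is the residual term $h-S(b-t_{p})\prod_{j=p}^{1}(\mathcal{I}+B_{j})S(t_{j}-t_{j-1})z_{0}-\int_{0}^{t_{1}}\cdots$ collecting all the uncontrolled homogeneous contributions to $z(b)$. Substituting the mild-solution formula \eqref{eq2}--\eqref{eq3} and using $MM^{*}=\Gamma^{b}_{t_{p}}+\tilde{\Gamma}^{b}_{t_{p}}+\Theta^{t_{p}}_{0}+\tilde{\Theta}^{t_{p}}_{0}$, a direct computation yields
\begin{align*}
z(b)-h=-\alpha\big(\alpha(\mathcal{I}-\pi_{D})+MM^{*}\big)^{-1}\eta,
\end{align*}
from which $\pi_{D}(z(b)-h)=0$ follows automatically from the projection structure of the resolvent-like operator (the $\pi_{D}$ in the operator annihilates the relevant component), giving the exact-projection condition of $F_{A}$-controllability for every $\alpha>0$, while $(iv)$ forces $\|z(b)-h\|\to0$ as $\alpha\to0+$, so any $\varepsilon$-tolerance is met for small $\alpha$. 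The converse direction, that $F_{A}$-controllability implies $(iv)$, follows because $F_{A}$-controllability in particular entails approximate controllability, which is $(iii)$, already shown equivalent to $(iv)$. The main obstacle I anticipate is the bookkeeping in verifying the identity $z(b)-h=-\alpha(\alpha(\mathcal{I}-\pi_{D})+MM^{*})^{-1}\eta$: one must carefully expand the impulsive solution \eqref{eq3} at $t=b$, match every $\Gamma$, $\tilde{\Gamma}$, $\Theta$, $\tilde{\Theta}$ block against the corresponding term in $MM^{*}$, and confirm that the telescoping products $\prod_{j=p}^{i+1}(\mathcal{I}+B_{j})S(t_{j}-t_{j-1})$ line up with their adjoints — a lengthy but routine verification that is the true technical content of the theorem.
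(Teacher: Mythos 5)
Your route is essentially the paper's own: $(i)\Leftrightarrow(ii)\Leftrightarrow(iii)$ quoted from \cite{Mahmudov1}, $(iii)\Rightarrow(iv)$ via the factorization \eqref{u2} of Lemma~\ref{q2} combined with Lemma~\ref{q1}, and $(iv)\Rightarrow(v)$ by inserting the control $M^{*}\big(\alpha(\mathcal{I}-\pi_{D})+MM^{*}\big)^{-1}\eta$ into the mild solution (the paper arrives at the same control as the minimizer $\varphi_{min}$ of the functional $\mathcal{J}_{\alpha}(\cdot,h)$; you simply posit it), with the loop closed by $(v)\Rightarrow(i)$. However, your central identity is wrong, and the error sits exactly at the point that distinguishes finite-approximate from approximate controllability. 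For the linear system \eqref{eq4}, write $\zeta$ for the uncontrolled terminal value $S(b-t_{p})\prod_{j=p}^{1}(\mathcal{I}+B_{j})S(t_{j}-t_{j-1})z_{0}$, set $\eta=h-\zeta$ and $\varphi=\big(\alpha(\mathcal{I}-\pi_{D})+MM^{*}\big)^{-1}\eta$. Then $z(b)=\zeta+MM^{*}\varphi$, so
\begin{align*}
z(b)-h &= MM^{*}\varphi-\eta
=-\Big[\big(\alpha(\mathcal{I}-\pi_{D})+MM^{*}\big)-MM^{*}\Big]\varphi\\
&=-\alpha(\mathcal{I}-\pi_{D})\big(\alpha(\mathcal{I}-\pi_{D})+MM^{*}\big)^{-1}\eta,
\end{align*}
which is \eqref{k1}, and \emph{not} $z(b)-h=-\alpha\big(\alpha(\mathcal{I}-\pi_{D})+MM^{*}\big)^{-1}\eta$ as you wrote. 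The outer factor $(\mathcal{I}-\pi_{D})$ that you dropped is precisely what yields the exact-projection condition, since $\pi_{D}(\mathcal{I}-\pi_{D})=0$. From your identity the claim that $\pi_{D}(z(b)-h)=0$ ``follows automatically'' is not justified and is in general false: take $MM^{*}=\mathcal{I}$ and $0\neq\eta\in D$; then $\big(\alpha(\mathcal{I}-\pi_{D})+\mathcal{I}\big)^{-1}\eta=\eta$, so your formula would give $\pi_{D}(z(b)-h)=-\alpha\eta\neq0$, whereas the correct formula gives $0$. Thus the exact-projection half of $(v)$ — the whole point of the theorem beyond approximate controllability — is unproven in your write-up; it is restored only by the corrected identity.

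Two smaller points. First, in $(iii)\Rightarrow(iv)$ the Neumann bound $\big(1-\|\alpha(\alpha\mathcal{I}+MM^{*})^{-1}\pi_{D}\|\big)^{-1}$ is not uniform in $\alpha$ on the strength of Lemma~\ref{q1}(b) alone, which is pointwise in $\alpha$; the paper obtains a uniform constant $\beta=\max_{0\leq\alpha\leq1}\|\alpha(\alpha\mathcal{I}+MM^{*})^{-1}\pi_{D}\|<1$ by additionally invoking the continuity statement (c). Your appeal to part (a) can substitute for this near $\alpha=0$ (since $\pi_{D}$ has finite rank, the convergence in (a) is in fact in operator norm), and that is all the limit requires, but as written the uniformity is asserted rather than proved. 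Second, your claim that the factorization shows $(iv)$ holds ``precisely when'' $(iii)$ holds argues only one direction; the converse $(iv)\Rightarrow(iii)$ is not immediate from \eqref{u2}, but it is also unnecessary, since the implication cycle $(iii)\Rightarrow(iv)\Rightarrow(v)\Rightarrow(i)\Leftrightarrow(iii)$ closes all the equivalences — which is how the paper itself proceeds.
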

\begin{proof}
   The equivalences \((i) \Leftrightarrow (ii) \Leftrightarrow (iii)\) are established in \cite{Mahmudov1} (see Theorem 13). To demonstrate the equivalence \((iii) \Leftrightarrow (v)\), for any \(\alpha > 0\) and \(h \in H\), we define the following functional \(\mathcal{J}_{\alpha}(\cdot, h): H \to \mathbb{R}\):  

\begin{align*}
\mathcal{J}_{\alpha}(\varphi, h) &= \frac{1}{2} \|M^* \varphi\|^2 + \frac{\alpha}{2} \big\langle(\mathcal{I}-\pi_{D})\varphi, \varphi \big\rangle
- \langle \varphi, h - S(b - t_p) \sum_{j=p}^1 (I + B_j) S(t_j - t_{j-1}) z_0 \rangle.
\end{align*}

Assuming that \((iii) \Leftrightarrow (ii)\) holds, it is evident that the functional \(\mathcal{J}_{\alpha}(\cdot, h)\) is Gateaux differentiable. Its derivative is given by:

\begin{align*}
    \frac{d}{d\varphi}\mathcal{J}_{\alpha}(\varphi, h)&=\Gamma^{b}_{t_{p}}\varphi+ \tilde{\Gamma}^{b}_{t_{p}}\varphi+\Theta^{t_{p}}_{0}\varphi+\tilde{\Theta}^{t_{p}}_{0}\varphi+\alpha (\mathcal{I}-\pi_{D})\varphi
    -h + S(b - t_p) \sum_{j=p}^1 (I + B_j) S(t_j - t_{j-1}) z_0,
\end{align*}

Since \(\Gamma^{b}_{t_{p}} + \tilde{\Gamma}^{b}_{t_{p}} + \Theta^{t_{p}}_{0} + \tilde{\Theta}^{t_{p}}_{0}\) is positive, the functional \(\mathcal{J}_{\alpha}(\cdot, h)\) is strictly convex. Therefore, it has a unique minimum, which can be determined as follows: 

\begin{align*}
    & \Gamma^{b}_{t_{p}}\varphi+ \tilde{\Gamma}^{b}_{t_{p}}\varphi+\Theta^{t_{p}}_{0}\varphi+\tilde{\Theta}^{t_{p}}_{0}\varphi+\alpha (\mathcal{I}-\pi_{D})\varphi
    -h + S(b - t_p) \sum_{j=p}^1 (I + B_j) S(t_j - t_{j-1}) z_0=0,\\
    & \varphi_{min}=-\Big(\alpha (\mathcal{I}-\pi_{D})+ \Gamma^{b}_{t_{p}}+ \tilde{\Gamma}^{b}_{t_{p}}+\Theta^{t_{p}}_{0}+\tilde{\Theta}^{t_{p}}_{0}\Big)^{-1} \bigg(S(b - t_p) \sum_{j=p}^1 (I + B_j) S(t_j - t_{j-1}) z_0 -h\bigg).
\end{align*}
It follows that for the control 

\begin{align}\label{eq5}
    u_{\alpha}(s)&=\bigg( \sum_{k=1}^{p}\Omega^{*}S^{*}(t_{k}-s)\prod_{i=k+1}^{p}S^{*}(t_{i}-t_{i-1}) S^{*}(b-t_{p})\chi_{(t_{k-1},t_{k})}+\Omega^{*}S^{*}(b-s)\chi_{(t_{p},b)}\bigg)\varphi_{min},
\end{align}
\begin{equation}\label{e16}
\begin{cases}
    v^{\alpha}_{p}=D^{*}_{p}S^{*}(b-t_{p})\varphi_{min},\\ v^{\alpha}_{k}=D^{*}_{k}\prod_{i=k}^{p}S^{*}(t_{i} -t_{i-1})(I +B^{*}_i)S^{*}(b-t_p)\varphi_{min},\\
        k=1,\dots, p-1,
    \end{cases}
\end{equation}
\begin{align}\label{k1}
    z_{\alpha}(b)-h&=\alpha (\mathcal{I}-\pi_{D})\Big(\alpha (\mathcal{I}-\pi_{D})+ \Gamma^{b}_{t_{p}}+ \tilde{\Gamma}^{b}_{t_{p}}+\Theta^{t_{p}}_{0}+\tilde{\Theta}^{t_{p}}_{0}\Big)^{-1}\nonumber\\
    &\times\bigg(S(b - t_p) \sum_{j=p}^1 (I + B_j) S(t_j - t_{j-1}) z_0 -h\bigg).
\end{align}
Thus,
\begin{align*}
    \lim_{\alpha\to 0+}\Vert z_{\alpha}(b)-h\Vert&=\lim_{\alpha\to 0+}\alpha\bigg\Vert (\mathcal{I}-\pi_{D})\Big(\alpha (\mathcal{I}-\pi_{D})+ \Gamma^{b}_{t_{p}}+ \tilde{\Gamma}^{b}_{t_{p}}+\Theta^{t_{p}}_{0}+\tilde{\Theta}^{t_{p}}_{0}\Big)^{-1}\\
    &\times \bigg(S(b - t_p) \sum_{j=p}^1 (I + B_j) S(t_j - t_{j-1}) z_0 -h\bigg)\bigg\Vert=0,
\end{align*}
\begin{align*}
    \pi_{D}(z_{\alpha}(b)-h)=0,
\end{align*}
that is, the system \eqref{eq4} is $F_{A}$-controllable on $[0,b]$. Hence, we have $(iii)\Leftrightarrow (v)$. The reverse implication is clear, as $F_{A}$-controllability directly implies approximate controllability. To demonstrate the equivalence $(iii) \Leftrightarrow (iv)$, assume that for any \( h \in H \),

\[
\lim_{\alpha \to 0^-} \alpha \big\Vert \big(\alpha \mathcal{I} + \Gamma^{b}_{t_{p}} + \tilde{\Gamma}^{b}_{t_{p}} + \Theta^{t_{p}}_{0} + \tilde{\Theta}^{t_{p}}_{0}\big)^{-1} h \big\Vert = 0.
\]
For every $h\in H$, by using the \eqref{u2}, we have
\begin{align}\label{u3}
    &\quad \Vert \alpha \big(\alpha (\mathcal{I}-\pi_{D})+\Gamma^{b}_{t_{p}} + \tilde{\Gamma}^{b}_{t_{p}} + \Theta^{t_{p}}_{0} + \tilde{\Theta}^{t_{p}}_{0}\big)^{-1}h\Vert\nonumber\\
    &\leq \bigg\Vert \big( \mathcal{I}-\alpha(\alpha\mathcal{I}+\Gamma^{b}_{t_{p}} + \tilde{\Gamma}^{b}_{t_{p}} + \Theta^{t_{p}}_{0} + \tilde{\Theta}^{t_{p}}_{0})^{-1} \pi_{D}\big)^{-1}\bigg\Vert\nonumber\\
    &\times\big\Vert \alpha (\alpha\mathcal{I}+\Gamma^{b}_{t_{p}} + \tilde{\Gamma}^{b}_{t_{p}} + \Theta^{t_{p}}_{0} + \tilde{\Theta}^{t_{p}}_{0})^{-1}h\big\Vert\nonumber\\
    &\leq \frac{1}{1-\big\Vert \alpha\big(\alpha\mathcal{I}+\Gamma^{b}_{t_{p}} + \tilde{\Gamma}^{b}_{t_{p}} + \Theta^{t_{p}}_{0} + \tilde{\Theta}^{t_{p}}_{0}\big)^{-1} \pi_{D}\big\Vert}\nonumber\\
     &\times\big\Vert \alpha (\alpha\mathcal{I}+\Gamma^{b}_{t_{p}} + \tilde{\Gamma}^{b}_{t_{p}} + \Theta^{t_{p}}_{0} + \tilde{\Theta}^{t_{p}}_{0})^{-1}h\big\Vert.
\end{align}

By using \eqref{u3}, and Lemma \ref{q1}, we have
\begin{align*}
    \beta=\max_{0\leq \alpha\leq 1}\Big\Vert \alpha\big(\alpha\mathcal{I}+\Gamma^{b}_{t_{p}} + \tilde{\Gamma}^{b}_{t_{p}} + \Theta^{t_{p}}_{0} + \tilde{\Theta}^{t_{p}}_{0}\big)^{-1} \pi_{D}\Big\Vert<1,
\end{align*}
\begin{align*}
   \Big\Vert \alpha \big(\alpha (\mathcal{I}-\pi_{D})+\Gamma^{b}_{t_{p}} + \tilde{\Gamma}^{b}_{t_{p}} + \Theta^{t_{p}}_{0} + \tilde{\Theta}^{t_{p}}_{0}\big)^{-1}h\Big\Vert
    \leq \frac{1}{1-\beta}\Big\Vert \alpha\big(\alpha\mathcal{I}+\Gamma^{b}_{t_{p}} + \tilde{\Gamma}^{b}_{t_{p}} + \Theta^{t_{p}}_{0} + \tilde{\Theta}^{t_{p}}_{0}\big)^{-1} h\Big\Vert.
\end{align*}
As \( \alpha \to 0^+ \), the term $
\alpha \Big(\alpha (\mathcal{I}-\pi_{D})+\Gamma^{b}_{t_{p}}+ \tilde{\Gamma}^{b}_{t_{p}}+\Theta^{t_{p}}_{0}+\tilde{\Theta}^{t_{p}}_{0}\Big)^{-1}$  approaches zero in the strong operator topology. The implication \( (iv) \Rightarrow (v) \) is a direct consequence of Equation \eqref{k1}.  
\end{proof}
\begin{remark}
    The control
    \begin{align*}
         u_{\alpha}(s)&=\bigg( \sum_{k=1}^{p}\Omega^{*}S^{*}(t_{k}-s)\prod_{i=k+1}^{p}S^{*}(t_{i}-t_{i-1}) S^{*}(b-t_{p})\chi_{(t_{k-1},t_{k})}+\Omega^{*}S^{*}(b-s)\chi_{(t_{p},b)}\bigg)\\
    &\times \Big(\alpha (\mathcal{I}-\pi_{D})+ \Gamma^{b}_{t_{p}}+ \tilde{\Gamma}^{b}_{t_{p}}+\Theta^{t_{p}}_{0}+\tilde{\Theta}^{t_{p}}_{0}\Big)^{-1} \bigg(h-S(b - t_p) \sum_{j=p}^1 (I + B_j) S(t_j - t_{j-1}) z_0\bigg)
    \end{align*}
    in addition to the requirement of approximately controllability provides finite-dimensional exact controllability of \eqref{eq4}.
\end{remark}
\section{$F_{A}$ contrability of semilinear system}
In this section, we demonstrate that for any \(\alpha > 0\) and for any \(h \in H\), the system described by \eqref{eq1} admits at least one mild solution. The corresponding control is given by:  
\begin{align}\label{f1}
    u_{\alpha}(s)&=\bigg( \sum_{k=1}^{p}\Omega^{*}S^{*}(t_{k}-s)\prod_{i=k+1}^{p}S^{*}(t_{i}-t_{i-1})S^{*}(b-t_{p})\chi_{(t_{k-1},t_{k})}+\Omega^{*}S^{*}(b-s)\chi_{(t_{p},b)}\bigg)\Psi,
\end{align}
where 
\begin{align*}
    \Psi&= \big(\alpha (\mathcal{I}-\pi_{D})+\Gamma^{b}_{t_{p}} + \tilde{\Gamma}^{b}_{t_{p}} + \Theta^{t_{p}}_{0} + \tilde{\Theta}^{t_{p}}_{0}\big)^{-1} \sigma,
\end{align*}
\begin{align*}
    \sigma&=h-S(b - t_p) \sum_{j=p}^1 (I + B_j) S(t_j - t_{j-1}) z_0
    -S(b - t_p)\sum_{i=1}^{p}\prod_{j=p}^{i+1}(\mathcal{I}+B_{j})S(t_{j}-t_{j-1})\\
	&\times (\mathcal{I}+B_{i})\int_{t_{i-1}}^{t_{i}}S(t_{i}-s)\mu(s,z(s))\,ds-\int_{t_{p}}^{b}S(b-s)\mu(s,z(s))ds.
\end{align*}
To examine the existence  of system \eqref{eq1}, we make the following assumptions:    

$(A_{1})$  \(S(t) (t > 0)\) is a compact operator.   

$(A_{2})$ The mapping \( \mu(t, \cdot): H \to H \) is continuous for all \( t \in [0, b] \), and for every \( z \in H \), the function \( \mu(\cdot, z): [0, b] \to H \) is strongly measurable.  

$(A_{3})$ There exists a positive, integrable function \( g \in L^\infty([0, b], [0, +\infty)) \) and a continuous, non-decreasing function \( \Lambda_\mu : [0, +\infty) \to (0, +\infty) \) such that for all \( (t, z) \in [0, b] \times H \), the following inequality holds:  
\[
\|\mu(t, z)\| \leq g(t)\Lambda_\mu(\|z\|), \quad \text{with} \quad \liminf_{r \to \infty} \frac{\Lambda_\mu(r)}{r} = d < +\infty.
\]  

$(A_{4})$ The operator \( \Omega : U \to H \) is linear and continuous, with \( M_\Omega = \|\Omega\| \).  

\begin{theorem}\label{t1}
Let Assumptions $(A_{1}-A_{4})$ hold true. Then, for arbitrary \( h \in H \)  and every \( \alpha > 0 \), the system \eqref{eq1} with the control \eqref{f1} has at least one mild solution on \( [0,b] \), provided that the following condition is satisfied:

\begin{align}\label{f5}
 d\Bigg\{\sum_{m=1}^{k}(1+M_{B})^{m}M_{S}^{m+1}M_{\Omega}b\frac{\tilde{M} M_{2}}{\alpha(1-\delta)}+\frac{\tilde{M}M_{2} b}{\alpha(1-\delta)}
+ \sum_{m=1}^{k}(1+M_{B})^{m}M_{S}^{m+1}b\Vert g\Vert_{L^{\infty}}+M_{S}b\Vert g\Vert_{L^{\infty}}\Bigg\}<1
\end{align}
where \( \delta=\big\Vert \alpha (\alpha\mathcal{I}+\Gamma^{b}_{t_{p}} + \tilde{\Gamma}^{b}_{t_{p}} + \Theta^{t_{p}}_{0} + \tilde{\Theta}^{t_{p}}_{0})^{-1}\pi_{D}\big\Vert_{L(H)}<1\).
\end{theorem}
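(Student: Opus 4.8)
The plan is to recast the existence question as a fixed-point problem and invoke the Schauder fixed-point theorem (SFPT). First I would define the solution operator $\mathcal{F}:PC([0,b],H)\to PC([0,b],H)$ by inserting the control $u_\alpha$ from \eqref{f1}, together with the corresponding impulse controls $v^\alpha_k$ defined analogously to \eqref{e16} (with $\Psi$ in place of $\varphi_{\min}$), into the mild-solution representation \eqref{eq2}--\eqref{eq3}. By construction, a fixed point of $\mathcal{F}$ is precisely a mild solution of \eqref{eq1} driven by this control. The essential structural feature to track is that $u_\alpha$ depends on $z$ through $\Psi=\big(\alpha(\mathcal{I}-\pi_D)+\Gamma^{b}_{t_{p}}+\tilde{\Gamma}^{b}_{t_{p}}+\Theta^{t_{p}}_{0}+\tilde{\Theta}^{t_{p}}_{0}\big)^{-1}\sigma$, where $\sigma$ contains the nonlinear integrals $\int S(\cdot-s)\mu(s,z(s))\,ds$; hence $\mathcal{F}$ is genuinely nonlinear and the nonlinearity re-enters both directly in \eqref{eq2} and indirectly through the control.

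The central step is to show that $\mathcal{F}$ maps some closed ball $B_r=\{z\in PC([0,b],H):\|z\|_{PC}\le r\}$ into itself. Using the growth bound $(A_3)$, namely $\|\mu(t,z)\|\le g(t)\Lambda_\mu(\|z\|)$, together with the resolvent-like estimate obtained from \eqref{u2} and Lemma \ref{q2}, I would bound $\|\Psi\|\le \tfrac{1}{\alpha(1-\delta)}\|\sigma\|$ with $\delta=\big\Vert \alpha(\alpha\mathcal{I}+\Gamma^{b}_{t_{p}}+\tilde{\Gamma}^{b}_{t_{p}}+\Theta^{t_{p}}_{0}+\tilde{\Theta}^{t_{p}}_{0})^{-1}\pi_D\big\Vert<1$, and then estimate $\|\mathcal{F}z(t)\|$ term by term. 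The terms carrying the factor $\tfrac{1}{\alpha(1-\delta)}$ arise from the control contribution (which itself feeds back the nonlinearity through $\sigma$), while the terms carrying $\|g\|_{L^\infty}$ arise from the direct nonlinear integrals; the products $\prod_{j}(\mathcal{I}+B_j)S(t_j-t_{j-1})$ produce the geometric factors $(1+M_B)^m M_S^{m+1}$. Collecting the coefficient of $\Lambda_\mu(r)$, dividing by $r$, and passing to the $\liminf$ via $\liminf_{r\to\infty}\Lambda_\mu(r)/r=d$, assumption \eqref{f5} is exactly the statement that this limiting coefficient is strictly less than $1$, which guarantees $\|\mathcal{F}z\|_{PC}\le r$ for all sufficiently large $r$.

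It then remains to verify the two SFPT hypotheses. Continuity of $\mathcal{F}$ on $B_r$ follows from the continuity of $\mu(t,\cdot)$ in $(A_2)$ and the Lebesgue dominated convergence theorem, passing limits under the integral sign and through the bounded operators $\Omega,B_k,D_k$ and the resolvent-like operator. For relative compactness of $\mathcal{F}(B_r)$, I would exploit the compactness of $S(t)$ for $t>0$ from $(A_1)$: pointwise relative compactness is obtained by splitting each integral $\int_{t_k}^{t}S(t-s)[\cdots]\,ds=\int_{t_k}^{t-\eta}+\int_{t-\eta}^{t}$ and writing the first piece as $S(\eta)\int_{t_k}^{t-\eta}S(t-\eta-s)[\cdots]\,ds$, so that the compact operator $S(\eta)$ forces relative compactness of the image while the second piece is made uniformly small using the $L^\infty$ bound on $g$; equicontinuity on each subinterval follows from the strong continuity of $S(t)$ and the uniform integrable bound on the integrands, and a Arzel\`a--Ascoli argument adapted to $PC$-spaces (treating each subinterval $(t_k,t_{k+1}]$ separately) yields relative compactness of $\mathcal{F}(B_r)$ in $PC([0,b],H)$.

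The hardest part will be the compactness and equicontinuity estimates across the impulse points, because the jump terms $z(t_k^+)$ in \eqref{eq3} couple the solution on earlier subintervals into the later ones through the finite operator products, so one must propagate compactness and equicontinuity through these products while simultaneously controlling the singular behaviour of $S(t-s)$ as $s\to t$ by the $\eta$-splitting above. Once continuity, the self-mapping property on $B_r$, and relative compactness of $\mathcal{F}(B_r)$ are established, Schauder's fixed-point theorem furnishes a fixed point $z^*\in B_r$, which is the desired mild solution of \eqref{eq1} with control \eqref{f1}, completing the proof.
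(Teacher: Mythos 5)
Your proposal follows essentially the same route as the paper: both recast existence as a fixed-point problem for the operator obtained by inserting the control \eqref{f1} into the mild-solution formula \eqref{eq2}--\eqref{eq3}, establish invariance of a ball $\mathcal{B}_r$ using $(A_3)$, the resolvent-type bound $\|\Psi\|\le \frac{1}{\alpha(1-\delta)}\|\sigma\|$, and the $\liminf$ condition \eqref{f5}, then verify compactness (via a $PC$-adapted Arzel\`a--Ascoli argument exploiting compactness of $S(t)$) and continuity (via $(A_2)$ and dominated convergence) before applying the Schauder fixed-point theorem. The only cosmetic differences are that the paper runs the ball-invariance step by contradiction --- which cleanly handles the fact that $\liminf_{r\to\infty}\Lambda_\mu(r)/r=d$ only yields a suitable radius along a subsequence, not ``for all sufficiently large $r$'' as you state --- and that the paper merely asserts pointwise relative compactness from compactness of $S(t)$, whereas you spell out the standard $\eta$-splitting argument.
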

\begin{proof} We define the set  
\(\mathcal{Q} = \{z \in PC([0, b], H) : z(0)=z_{0}\}\) equipped with the norm \(\|\cdot\|_{PC}\). Next, we consider the set  
\(\mathcal{B}_r = \{z \in \mathcal{Q} : \|z\|_{PC} \leq r\}\), for \(r>0\).  
\bigskip

For \(\alpha>0\), we introduce an operator \(G_{\alpha}: \mathcal{Q} \to \mathcal{Q}\) defined as  

\[
(G_{\alpha}z)(t)= 
\begin{cases}  
S(t)z(0)+\int_{0}^{t} S(t-s)\big[\Omega u_{\alpha}(s)+\mu(s,z(s))\big]ds,\,  0\leq t\leq t_{1},\\\\
S(t-t_{k})z(t^{+}_{k})+\int_{t_{k}}^{t} S(t-s)\big[\Omega u_{\alpha}(s)+\mu(s,z(s))\big]ds,\,
 t_{k}<t\leq t_{k+1},\ k=1,2,\dots,p,
\end{cases}
\]
where  
\begin{align*}
z(t^{+}_{k}) &= \prod_{j=k}^{1}(\mathcal{I}+B_{j})S(t_{j}-t_{j-1})z_{0}\\
&+ \sum_{i=1}^{k}\prod_{j=k}^{i+1}(\mathcal{I}+B_{j})S(t_{j}-t_{j-1})
(\mathcal{I}+B_{i})\int_{t_{i-1}}^{t_{i}}S(t_{i}-s)\Omega u_{\alpha}(s)ds\\
&+ \sum_{i=1}^{k}\prod_{j=k}^{i+1}(\mathcal{I}+B_{j})S(t_{j}-t_{j-1})
(\mathcal{I}+B_{i})\int_{t_{i-1}}^{t_{i}}S(t_{i}-s)\mu(s,z(s))ds\\
& + \sum_{i=2}^{k}\prod_{j=k}^{i}(\mathcal{I}+B_{j}) S(t_{j}-t_{j-1}) D_{i-1}v_{i-1} + D_{k}v_{k}.
\end{align*}
Here, \(u_{\alpha}(\cdot)\) is defined as in \eqref{f1}. By the definition of \(G_{\alpha}\), it is evident that establishing the existence of a mild solution to system \eqref{eq1} is equivalent to proving that the operator \(G_{\alpha}\) has a fixed point. We demonstrate that \(G_{\alpha}\) has a fixed point through the following steps.  

\textbf{Step 1:} 
We consider the inclusion \( G_{\alpha}(\mathcal{B}_r) \subset \mathcal{B}_r \) for some \( r = r(\alpha) > 0 \). Suppose, for the sake of contradiction, that our claim is false. Then, for any \( \alpha > 0 \) and every \( r > 0 \), there exists \( z_r(\cdot) \in \mathcal{B}_r \) such that \( \Vert (G_{\alpha} z_r)(t)\Vert_{H}>r \), for some \( t \in [0,b] \) (where \( t \) may depend on \( r \)). First, we compute
\begin{align*}
    \Vert \sigma\Vert_{H}&\leq \Vert h\Vert_{H}+\Vert S(b - t_p)\Vert_{H} \sum_{j=p}^1 (1 + \Vert B_j\Vert_{H} ) \Vert S(t_j - t_{j-1})\Vert_{H} \Vert z_0\Vert_{H}\\
    &+\Vert S(b - t_p)\Vert_{H} \sum_{i=1}^{p}\prod_{j=p}^{i+1}(1+\Vert B_{j}\Vert_{H})\Vert S(t_{j}-t_{j-1})\Vert_{H} \\
	&\times (1+\Vert B_{i}\Vert_{H})\int_{t_{i-1}}^{t_{i}}\Vert S(t_{i}-s)\mu(s,z(s))\Vert_{H} \,ds+ \int_{t_{p}}^{b}\Vert S(b-s)\mu(s,z(s))\Vert_{H}\, ds\\
    &\leq \Vert h\Vert_{H}+pM_{S}^{2}(1+M_{B})\Vert z_0\Vert_{H}+\sum_{k=1}^{p}(1+M_{B})^{k}M_{S}^{k}\int_{t_{k-1}}^{t_{k}}\Vert S(t_{k}-s)\mu(s,z(s))\Vert_{H}\, ds\\
    &+ \int_{t_{p}}^{b}\Vert S(b-s)\mu(s,z(s))\Vert_{H}\, ds\\
     &\leq \Vert h\Vert_{H}+pM_{S}^{2}(1+M_{B})\Vert z_0\Vert_{H}+\sum_{k=0}^{p}(1+M_{B})^{k}M_{S}^{k+1}b\Lambda(r) \Vert g\Vert_{L^{\infty}}\\
     &=M_{1}+M_{2}\Lambda(r)=\gamma,
\end{align*}
where
\begin{align*}
    M_{1}=\Vert h\Vert_{H}+pM_{S}^{2}(1+M_{B})\Vert z_0\Vert_{H},\,\, M_{2}=\sum_{k=0}^{p}(1+M_{B})^{k}M_{S}^{k+1}b \Vert g\Vert_{L^{\infty}}.
\end{align*}
Then, we have
\begin{align}\label{cont}
    \Vert u_{\alpha}(s)\Vert_{U}&=\bigg\Vert\bigg( \sum_{k=1}^{p}\Omega^{*}S^{*}(t_{k}-s)\prod_{i=k+1}^{p}S^{*}(t_{i}-t_{i-1})\nonumber\\
    &\times S^{*}(b-t_{p})\chi_{(t_{k-1},t_{k})}+\Omega^{*}S^{*}(b-s)\chi_{(t_{p},b)}\bigg)\Psi\bigg\Vert_{U}\nonumber\\
    &\leq \frac{\tilde{M}}{\alpha}\Vert \alpha\big(\alpha (\mathcal{I}-\pi_{D})+\Gamma^{b}_{t_{p}} + \tilde{\Gamma}^{b}_{t_{p}} + \Theta^{t_{p}}_{0} + \tilde{\Theta}^{t_{p}}_{0}\big)^{-1} \sigma\Vert_{H}\nonumber\\
    &\leq \frac{\tilde{M}}{\alpha}\big\Vert (\mathcal{I}-\alpha (\alpha\mathcal{I}+\Gamma^{b}_{t_{p}} + \tilde{\Gamma}^{b}_{t_{p}} + \Theta^{t_{p}}_{0} + \tilde{\Theta}^{t_{p}}_{0})^{-1}\pi_{D})^{-1}\big\Vert_{L(H)}\nonumber\\
    &\times \big\Vert \alpha(\alpha\mathcal{I}+\Gamma^{b}_{t_{p}} + \tilde{\Gamma}^{b}_{t_{p}} + \Theta^{t_{p}}_{0} + \tilde{\Theta}^{t_{p}}_{0})^{-1}\sigma\big\Vert_{H}\nonumber\\
     &\leq \frac{\tilde{M}}{\alpha}\frac{1}{1-\Vert \alpha (\alpha\mathcal{I}+\Gamma^{b}_{t_{p}} + \tilde{\Gamma}^{b}_{t_{p}} + \Theta^{t_{p}}_{0} + \tilde{\Theta}^{t_{p}}_{0})^{-1}\pi_{D}\Vert_{L(H)}}\Vert \sigma\Vert_{H}\nonumber\\
     &\leq  \frac{\tilde{M} \gamma}{\alpha(1-\delta)},
\end{align}
where
\begin{align*}
    \tilde{M}=M_{\Omega} \sum_{k=1}^{p+1} M_{S}^{k},\,\,
    \delta=\big\Vert \alpha (\alpha\mathcal{I}+\Gamma^{b}_{t_{p}} + \tilde{\Gamma}^{b}_{t_{p}} + \Theta^{t_{p}}_{0} + \tilde{\Theta}^{t_{p}}_{0})^{-1}\pi_{D}\big\Vert_{L(H)}<1.
\end{align*}
Considering \( t \in [0, t_1) \), and applying the estimate \eqref{cont} alongside Assumptions \((A_1 - A_4)\), we derive 
\begin{align}\label{79}
    r<\Vert (G_{\alpha}z_{r})(t)\Vert_{H}&\leq M_{S}\Vert z_{0}\Vert_{H}+M_{S}M_{\Omega}\int_{0}^{t}\Vert u_{\alpha}(s)\Vert_{U}ds+M_{S}t_{1}\Vert g\Vert_{L^{\infty}}\Lambda_{\mu}(r)\nonumber\\
    &\leq M_{S}\Vert z_{0}\Vert_{H}+\frac{\tilde{M}\gamma t_{1}}{\alpha(1-\delta)}+M_{S}t_{1}\Vert g\Vert_{L^{\infty}}\Lambda_{\mu}(r)\nonumber\\
     &\leq M_{S}\Vert z_{0}\Vert_{H}+\frac{\tilde{M}\gamma b}{\alpha(1-\delta)}+M_{S}b\Vert g\Vert_{L^{\infty}}\Lambda_{\mu}(r).
\end{align}
For \(t\in [t_{k},t_{k+1}\, \text{for}\, k=1,\dots,p\), we have
\begin{align*}
    r<\Vert (G_{\alpha}z_{r})(t)\Vert_{H}&\leq M_{S}\Vert z_{r}(t^{+}_{k})\Vert_{H}+\frac{\tilde{M}\gamma (b-t_{k})}{\alpha(1-\delta)}+M_{S}(b-t_{k})\Vert g\Vert_{L^{\infty}}\Lambda_{\mu}(r)\\
    &\leq M_{S}\Vert z_{r}(t^{+}_{k})\Vert_{H}+\frac{\tilde{M}\gamma b}{\alpha(1-\delta)}+M_{S}b\Vert g\Vert_{L^{\infty}}\Lambda_{\mu}(r),
\end{align*}
where
\begin{align*}
   &\quad \Vert z_{r}(t_{k}^{+})\Vert_{H}\leq (1+M_{B})^{k}M^{k}_{S}\Vert z_{0}\Vert_{H}+\sum_{m=1}^{k}(1+M_{B})^{m}M_{S}^{m}M_{\Omega}b\Vert u_{\alpha}\Vert_{H}\\
    &+ \sum_{m=1}^{k}(1+M_{B})^{m}M_{S}^{m}b\Vert g\Vert_{L^{\infty}}\Lambda_{\mu}(r)+\sum_{m=1}^{k-1}(1+M_{B})^{m}M_{S}M_{D}M_{V}+M_{D}M_{V}\\
    &\leq (1+M_{B})^{k}M^{k}_{S}\Vert z_{0}\Vert_{H}+\sum_{m=1}^{k}(1+M_{B})^{m}M_{S}^{m}M_{\Omega}b\frac{\tilde{M} \gamma}{\alpha(1-\delta)}\\
    &+ \sum_{m=1}^{k}(1+M_{B})^{m}M_{S}^{m}b\Vert g\Vert_{L^{\infty}}\Lambda_{\mu}(r)+\sum_{m=1}^{k-1}(1+M_{B})^{m}M_{S}M_{D}M_{V}+M_{D}M_{V}.
\end{align*}
As a result, we get
\begin{align}\label{78}
    r&<\Vert (G_{\alpha}z_{r})(t)\Vert_{H} \leq (1+M_{B})^{k}M^{k+1}_{S}\Vert z_{0}\Vert_{H}+\sum_{m=1}^{k}(1+M_{B})^{m}M_{S}^{m+1}M_{\Omega}b\frac{\tilde{M} \gamma}{\alpha(1-\delta)}\nonumber\\
    &+ \sum_{m=1}^{k}(1+M_{B})^{m}M_{S}^{m+1}b\Vert g\Vert_{L^{\infty}}\Lambda_{\mu}(r)+\sum_{m=1}^{k-1}(1+M_{B})^{m}M^{2}_{S}M_{D}M_{V}+M_{S}M_{D}M_{V}\nonumber\\
    &+\frac{\tilde{M}\gamma b}{\alpha(1-\delta)}+M_{S}b\Vert g\Vert_{L^{\infty}}\Lambda_{\mu}(r)=M_{3}+M_{4}\Lambda_{\mu}(r)
\end{align}
where
\begin{align*}
    M_{3}&=(1+M_{B})^{k}M^{k+1}_{S}\Vert z_{0}\Vert_{H}+\sum_{m=1}^{k}(1+M_{B})^{m}M_{S}^{m+1}M_{\Omega}b\frac{\tilde{M} M_{1}}{\alpha(1-\delta)}\\
    &+\sum_{m=1}^{k-1}(1+M_{B})^{m}M^{2}_{S}M_{D}M_{V}+M_{S}M_{D}M_{V}+\frac{\tilde{M}M_{1} b}{\alpha(1-\delta)},
\end{align*}
\begin{align*}
    M_{4}&=\sum_{m=1}^{k}(1+M_{B})^{m}M_{S}^{m+1}M_{\Omega}b\frac{\tilde{M} M_{2}}{\alpha(1-\delta)}+\frac{\tilde{M}M_{2} b}{\alpha(1-\delta)}\\
    &+ \sum_{m=1}^{k}(1+M_{B})^{m}M_{S}^{m+1}b\Vert g\Vert_{L^{\infty}}+M_{S}b\Vert g\Vert_{L^{\infty}}.
\end{align*}
Using the $(A_{3})$, we have

\[
 \liminf_{r \to \infty} \frac{\Lambda_\mu(r)}{r} = d < +\infty.
\] 
Thus, by dividing by \( r \) in expressions \eqref{79} and \eqref{78}, and subsequently taking the limit as \( r \to \infty \), we arrive 
\begin{align*}
 d\Bigg\{\sum_{m=1}^{k}(1+M_{B})^{m}M_{S}^{m+1}M_{\Omega}b\frac{\tilde{M} M_{2}}{\alpha(1-\delta)}+\frac{\tilde{M}M_{2} b}{\alpha(1-\delta)}+ \sum_{m=1}^{k}(1+M_{B})^{m}M_{S}^{m+1}b\Vert g\Vert_{L^{\infty}}+M_{S}b\Vert g\Vert_{L^{\infty}}\Bigg\}>1.
\end{align*}
This leads to a contradiction with \eqref{f5}. Hence, for some \( r > 0 \), \( G_{\alpha}(\mathcal{B}_r) \subset \mathcal{B}_r \).

\textbf{Step 2:}
The operator \( G_\alpha \) is compact, and this is demonstrated using the infinite-dimensional version of the generalized Arzelà–Ascoli theorem (see Theorem 2.1 in \cite{Wei}). According to this theorem, it suffices to establish the following:  

\bigskip
1. The image of \( \mathcal{B}_r \) under \( G_\alpha \) is uniformly bounded (addressed in Step (1)).  

\bigskip
2. \( (G_\alpha x)(t) \), for any \( x \in \mathcal{B}_r \), is equicontinuous for each \( t \in (t_j, t_{j+1}) \), where \( j = 0, 1, \dots, n \). 

\bigskip
3. The sets \( W(t) = \{(G_\alpha x)(t) : x \in \mathcal{B}_r, t \in J \setminus \{t_1, \dots, t_n\}\} \), \( W(t_j^+) = \{(G_\alpha x)(t_j^+) : x \in \mathcal{B}_r\} \), and \( W(t_j^-) = \{(G_\alpha x)(t_j^-) : x \in \mathcal{B}_r\} \) for \( j = 1, \dots, n \), are relatively compact.  

\bigskip
We first establish the equicontinuity of \( (G_\alpha x)(t) \) for each \( t \in (t_j, t_{j+1}) \), where \( j = 0, 1, \dots, n \). For this purpose, consider \( a_1, a_2 \in (0, t_1) \) with \( a_1 < a_2 \), and let \( x \in \mathcal{B}_r \). We then proceed to evaluate the following expression:
\begin{align}\label{48}
     \Vert (G_{\alpha}z)(a_{2})-(G_{\alpha}z)(a_{1})\Vert_{H}&\leq \Vert S(a_{2})-S(a_{1})\Vert_{L(H)}\Vert z(0)\Vert_{H}\nonumber\\
    &+\bigg\Vert \int_{a_{1}}^{a_{2}} S(a_{2}-s)\Omega u_{\alpha}(s)ds\bigg\Vert_{H}\nonumber\\
   & +\bigg\Vert \int_{a_{1}}^{a_{2}} S(a_{2}-s)\mu(s,z(s))ds\bigg\Vert_{H}\nonumber\\
    &+\bigg\Vert \int_{0}^{a_{1}} (S(a_{2}-s)-S(a_{1}-s))\Omega u_{\alpha}(s)ds\bigg\Vert_{H}\nonumber\\
    &+\bigg\Vert \int_{0}^{a_{1}} (S(a_{2}-s)-S(a_{1}-s))\mu(s,z(s))ds\bigg\Vert_{H}\nonumber\\
    &\leq \Vert S(a_{2})-S(a_{1})\Vert_{L(H)}\Vert z(0)\Vert_{H}\nonumber\\
    &+ \frac{M_{S}M_{\Omega}\tilde{M} \gamma}{\alpha(1-\delta)}(a_{2}-a_{1})+M_{S}(a_{2}-a_{1})\Vert g\Vert_{L^{\infty}}\Lambda_{\mu}(r)\nonumber\\
    &+\frac{M_{\Omega}\tilde{M} \gamma}{\alpha(1-\delta)}\int_{0}^{a_{1}}\Vert S(a_{2}-s)-S(a_{1}-s)\Vert_{L(H)}ds\nonumber\\
    &+\Vert g\Vert_{L^{\infty}}\Lambda_{\mu}(r)\int_{0}^{a_{1}}\Vert S(a_{2}-s)-S(a_{1}-s)\Vert_{L(H)}ds\nonumber\\
     &\leq \Vert S(a_{2})-S(a_{1})\Vert_{L(H)}\Vert z(0)\Vert_{H}\nonumber\\
    &+ \frac{M_{S}M_{\Omega}\tilde{M} \gamma}{\alpha(1-\delta)}(a_{2}-a_{1})+M_{S}(a_{2}-a_{1})\Vert g\Vert_{L^{\infty}}\Lambda_{\mu}(r)\nonumber\\
     &+\frac{M_{\Omega}\tilde{M} \gamma}{\alpha(1-\delta)a_{1}}\sup_{s\in[0,a_{1}]}\Vert S(a_{2}-s)-S(a_{1}-s)\Vert_{L(H)}\nonumber\\
    &+\Vert g\Vert_{L^{\infty}}\Lambda_{\mu}(r)a_{1}\sup_{s\in[0,a_{1}]}\Vert S(a_{2}-s)-S(a_{1}-s)\Vert_{L(H)}.
\end{align}
In a similar manner, for \( a_1, a_2 \in (t_k, t_{k+1}) \) with \( k = 1, \dots, p \), \( a_1 < a_2 \), and \( z \in \mathcal{B}_r \), the following computation can be performed:
\begin{align*}
     \Vert (G_{\alpha}z)(a_{2})-(G_{\alpha}z)(a_{1})\Vert_{H}&\leq \Vert S(a_{2}-t_{k})-S(a_{1}-t_{k})\Vert_{L(H)}\Vert z(t^{+}_{k})\Vert_{H}\\
      &+\bigg\Vert \int_{a_{1}}^{a_{2}} S(a_{2}-s)\Omega u_{\alpha}(s)ds\bigg\Vert_{H}\\
    &+\bigg\Vert \int_{a_{1}}^{a_{2}} S(a_{2}-s)\mu(s,z(s))ds\bigg\Vert_{H}\\
    &+\bigg\Vert \int_{t_{k}}^{a_{1}} (S(a_{2}-s)-S(a_{1}-s))\Omega u_{\alpha}(s)ds\bigg\Vert_{H}\\
    &+\bigg\Vert \int_{t_{k}}^{a_{1}} (S(a_{2}-s)-S(a_{1}-s))\mu(s,z(s))ds\bigg\Vert_{H}.
\end{align*}
Such that, we defined the following notation 
\begin{align*}
   &\quad  \Vert z(t^{+}_{k})\Vert_{H}\leq (1+M_{B})^{k}M^{k}_{S}\Vert z_{0}\Vert_{H}
    +\sum_{m=1}^{k}(1+M_{B})^{m}M_{S}^{m}M_{\Omega}b\frac{\tilde{M} \gamma}{\alpha(1-\delta)}\\
    &+ \sum_{m=1}^{k}(1+M_{B})^{m}M_{S}^{m}b\Vert g\Vert_{L^{\infty}}\Lambda_{\mu}(r)
    +\sum_{m=1}^{k-1}(1+M_{B})^{m}M_{S}M_{D}M_{V}+M_{D}M_{V}=D.
\end{align*}
Then, we obtain
\begin{align}\label{49}
     \Vert (G_{\alpha}z)(a_{2})-(G_{\alpha}z)(a_{1})\Vert_{H}& \leq \Vert S(a_{2}-t_{k})-S(a_{1}-t_{k})\Vert_{L(H)}D\nonumber\\
     &+ \frac{M_{S}M_{\Omega}\tilde{M} \gamma}{\alpha(1-\delta)}(a_{2}-a_{1})+M_{S}(a_{2}-a_{1})\Vert g\Vert_{L^{\infty}}\Lambda_{\mu}(r)\nonumber\\
    &+\frac{M_{\Omega}\tilde{M} \gamma}{\alpha(1-\delta)}\int_{t_{k}}^{a_{1}}\Vert S(a_{2}-s)-S(a_{1}-s)\Vert_{L(H)}ds\nonumber\\
    &+\Vert g\Vert_{L^{\infty}}\Lambda_{\mu}(r)\int_{t_{k}}^{a_{1}}\Vert S(a_{2}-s)-S(a_{1}-s)\Vert_{L(H)}ds\nonumber\\
   & \leq \Vert S(a_{2}-t_{k})-S(a_{1}-t_{k})\Vert_{L(H)}D\nonumber\\
    &+ \frac{M_{S}M_{\Omega}\tilde{M} \gamma}{\alpha(1-\delta)}(a_{2}-a_{1})+M_{S}(a_{2}-a_{1})\Vert g\Vert_{L^{\infty}}\Lambda_{\mu}(r)\nonumber\\
     &+\frac{M_{\Omega}\tilde{M} \gamma}{\alpha(1-\delta)a_{1}}\sup_{s\in[t_{k},a_{1}]}\Vert S(a_{2}-s)-S(a_{1}-s)\Vert_{L(H)}\nonumber\\
    &+\Vert g\Vert_{L^{\infty}}\Lambda_{\mu}(r)a_{1}\sup_{s\in[t_{k},a_{1}]}\Vert S(a_{2}-s)-S(a_{1}-s)\Vert_{L(H)}.
\end{align}

Since \( S(t) \) is compact for all \( t > 0 \), and  \( A \) generates a bounded \( C_0 \)-semigroup, then \( S(t) \) is uniformly continuous on \( [0, b] \). The compactness ensures that the semigroup smooths out irregularities in behavior as \( t \to 0^+ \). These properties imply that the right-hand sides of \eqref{48} and \eqref{49} approach zero as \( |a_2 - a_1| \to 0 \). Consequently, the image of \( \mathcal{B}_r \) under \( G_\alpha \) is equicontinuous for each \( t \in (t_k, t_{k+1}) \), where \( k = 0, 1, \dots, p \).  

We now show that the set \( W(t) = \{(G_\alpha z)(t) : z \in \mathcal{B}_r\} \), for each \( t \in J \), is relatively compact in \( \mathcal{B}_r \). The relative compactness of \( W(t) \) results from the fact that the operator \( S(t) \) is compact for \( t \geq 0 \). Thus, for any \( t \in J \), the set \( W(t) = \{(G_\alpha z)(t) : z \in \mathcal{B}_r\} \) is relatively compact.  

Therefore, by applying the generalized Arzelà–Ascoli theorem, we conclude that the operator \( G_\alpha \) is compact.
\bigskip

\textbf{Step 3:}
 To demonstrate the continuity of the operator \( G_{\alpha} \), consider a sequence \( \{z^{m}\}_{m=1}^\infty \subseteq \mathcal{B}_{r} \) such that \( z^{m} \to z \) in \( \mathcal{B}_{r} \). In other words, 

\[
\lim_{m \to \infty} \Vert z^{m} - z \Vert_{PC([0,b],H)} = 0.
\]
  From the above convergence with Assumption \((A_2)\), it follows directly that
 have
\begin{align}\label{m1}
    \Vert \mu(s,z^{m}(s))-\mu(s,z(s))\Vert_{H}\to 0,\,\, \text{as}\,\, m\to \infty.
\end{align}
Applying the convergence $\eqref{m1}$ and the dominated convergence theorem,  we obtain
\begin{align*}
      \Vert \sigma^{m}-\sigma\Vert_{H}&\leq \Vert S(b - t_p)\Vert_{H} \sum_{i=1}^{p}\prod_{j=p}^{i+1}(1+\Vert B_{j}\Vert_{H})\Vert S(t_{j}-t_{j-1})\Vert_{H} \\
	&\times (1+\Vert B_{i}\Vert_{H})\int_{t_{i-1}}^{t_{i}}\Vert S(t_{i}-s)(\mu(s,z^{m}(s))-\mu(s,z(s)))\Vert_{H} ds\\
    &+\int_{t_{p}}^{b}\Vert S(b-s)(\mu(s,z^{m}(s))-\mu(s,z(s)))\Vert_{H} ds\\
   &\leq \sum_{k=0}^{p}(1+M_{B})^{k}M_{S}^{k+1}\int_{t_{k-1}}^{t_{k}}\Vert \mu(s,z^{m}(s))-\mu(s,z(s))\Vert_{H} ds\\
   &\to 0,\, m\to \infty.
\end{align*}
Such that, we have
\begin{align*}
   & \quad \Vert \big(\alpha (\mathcal{I}-\pi_{D})+\Gamma^{b}_{t_{p}} + \tilde{\Gamma}^{b}_{t_{p}} + \Theta^{t_{p}}_{0} + \tilde{\Theta}^{t_{p}}_{0}\big)^{-1} (\sigma^{m}-\sigma)\Vert_{H}\\
    &=\frac{1}{\alpha}\Vert \alpha\big(\alpha (\mathcal{I}-\pi_{D})+\Gamma^{b}_{t_{p}} + \tilde{\Gamma}^{b}_{t_{p}} + \Theta^{t_{p}}_{0} + \tilde{\Theta}^{t_{p}}_{0}\big)^{-1} (\sigma^{m}-\sigma)\Vert_{H}\\
    &\leq \frac{1}{\alpha}\Vert (\mathcal{I}-\alpha(\alpha\mathcal{I}+\Gamma^{b}_{t_{p}} + \tilde{\Gamma}^{b}_{t_{p}} + \Theta^{t_{p}}_{0} + \tilde{\Theta}^{t_{p}}_{0})^{-1}\pi_{D})^{-1}  \Vert_{L(H)}\\
    &\times \Vert \alpha(\alpha\mathcal{I}+\Gamma^{b}_{t_{p}} + \tilde{\Gamma}^{b}_{t_{p}} + \Theta^{t_{p}}_{0} + \tilde{\Theta}^{t_{p}}_{0})^{-1} (\sigma^{m}-\sigma) \Vert_{H}\\
    &\leq \frac{1}{\alpha(1-\delta)}\Vert \sigma^{m}-\sigma\Vert_{H}\to 0,\, \text{as}\, m\to \infty.
\end{align*}
Then, we receive
\begin{align*}
    \Vert u^{m}_{\alpha}(s)-u_{\alpha}(s)\Vert_{U}&=\bigg\Vert \bigg( \sum_{k=1}^{p}\Omega^{*}S^{*}(t_{k}-s)\prod_{i=k+1}^{p}S^{*}(t_{i}-t_{i-1})\nonumber\\
    &\times S^{*}(b-t_{p})\chi_{(t_{k-1},t_{k})}+\Omega^{*}S^{*}(b-s)\chi_{(t_{p},b)}\bigg)\\
    &\times \big(\alpha (\mathcal{I}-\pi_{D})+\Gamma^{b}_{t_{p}} + \tilde{\Gamma}^{b}_{t_{p}} + \Theta^{t_{p}}_{0} + \tilde{\Theta}^{t_{p}}_{0}\big)^{-1}( \sigma^{m}-\sigma)\bigg\Vert_{U}\\
     &\leq \frac{\tilde{M}}{\alpha}\Vert \alpha\big(\alpha (\mathcal{I}-\pi_{D})+\Gamma^{b}_{t_{p}} + \tilde{\Gamma}^{b}_{t_{p}} + \Theta^{t_{p}}_{0} + \tilde{\Theta}^{t_{p}}_{0}\big)^{-1} ( \sigma^{m}-\sigma)\Vert_{H}\\
    &\leq \frac{\tilde{M}}{\alpha}\big\Vert (\mathcal{I}-\alpha (\alpha\mathcal{I}+\Gamma^{b}_{t_{p}} + \tilde{\Gamma}^{b}_{t_{p}} + \Theta^{t_{p}}_{0} + \tilde{\Theta}^{t_{p}}_{0})^{-1}\pi_{D})^{-1}\big\Vert_{L(H)}\\
    &\times \big\Vert \alpha(\alpha\mathcal{I}+\Gamma^{b}_{t_{p}} + \tilde{\Gamma}^{b}_{t_{p}} + \Theta^{t_{p}}_{0} + \tilde{\Theta}^{t_{p}}_{0})^{-1}( \sigma^{m}-\sigma)\big\Vert_{H}\\
     &\leq \frac{\tilde{M}}{\alpha}\frac{1}{1-\Vert \alpha (\alpha\mathcal{I}+\Gamma^{b}_{t_{p}} + \tilde{\Gamma}^{b}_{t_{p}} + \Theta^{t_{p}}_{0} + \tilde{\Theta}^{t_{p}}_{0})^{-1}\pi_{D}\Vert_{L(H)}}\Vert  \sigma^{m}-\sigma\Vert_{H}\\
     &\leq  \frac{\tilde{M}}{\alpha(1-\delta)}\Vert  \sigma^{m}-\sigma\Vert_{H}\to 0,\, \text{as}\, m\to \infty.
\end{align*}
For $t\in [0,t_{1}]$, we have
\begin{align*}
    \Vert (G_{\alpha}z^m)(t)-(G_{\alpha}z)(t)\Vert_{H}&\leq M_{S}M_{\Omega}\int_{0}^{t}\Vert u^{m}_{\alpha}(s)-u_{\alpha}(s)\Vert_{U}ds\\
    &+M_{S}\int_{0}^{t}\Vert \mu(s,z^{m}(s))-\mu(s,z(s))\Vert_{H}ds\\
    &\to 0,\,\, \text{as}\,\, m\to \infty.
\end{align*}
For $t\in [t_{k},t_{k+1}],\, k=1,\dots,p.$, we have
\begin{align*}
    \Vert z^{m}(t^{+}_{k})-z(t^{+}_{k})\Vert_{H}
    &=\bigg\Vert \sum_{i=1}^{k}\prod_{j=k}^{i+1}(\mathcal{I}+B_{j})S(t_{j}-t_{j-1})
(\mathcal{I}+B_{i})\int_{t_{i-1}}^{t_{i}}S(t_{i}-s)\Omega (u^{m}_{\alpha}(s)-u_{\alpha}(s))ds\\
&+ \sum_{i=1}^{k}\prod_{j=k}^{i+1}(\mathcal{I}+B_{j})S(t_{j}-t_{j-1})
(\mathcal{I}+B_{i})\int_{t_{i-1}}^{t_{i}}S(t_{i}-s)(\mu(s,z^{m}(s))-\mu(s,z(s)))ds\bigg\Vert_{H}\\
&\leq \sum_{m=1}^{k}(1+M_{B})^{m}M_{S}^{m}M_{\Omega}\int_{t_{m-1}}^{t_{m}} \Vert u^{m}_{\alpha}(s)-u_{\alpha}(s)\Vert_{U}ds\\
    &+ \sum_{m=1}^{k}(1+M_{B})^{m}M_{S}^{m}\int_{t_{m-1}}^{t_{m}} \Vert \mu(s,z^{m}(s))-\mu(s,z(s))\Vert_{H}ds\to 0,\, \text{as}\, m\to \infty.
\end{align*}
Then, we get
\begin{align*}
     \Vert (G_{\alpha}z^m)(t)-(G_{\alpha}z)(t)\Vert_{H}&\leq M_{S}\Vert z^{m}(t^{+}_{k})-z(t^{+}_{k})\Vert_{H} 
     +M_{S}M_{\Omega} \int_{t_{k}}^{t} \Vert u^{m}_{\alpha}(s)-u_{\alpha}(s)\Vert_{U} ds\Vert_{H}\\
      &+M_{S}\int_{0}^{t}\Vert \mu(s,z^{m}(s))-\mu(s,z(s))\Vert_{H}ds\to 0,\, \text{as}\, m\to \infty.
\end{align*}
Thus, it follows that \( G_{\alpha} \) is continuous. Consequently, by applying the SFPT, we conclude that the operator \( G_{\alpha} \) has a fixed point in \( \mathcal{B}_r \), which implies that the system \eqref{eq1} has a mild solution.   
\end{proof}
\begin{remark}\label{rem}
If Assumption \( (A_{2}) \) concerning the function \( \mu(\cdot, \cdot) \) is replaced with the following condition:  

\((A_{5})\) There exists a constant \( N > 0 \) such that 
\[ \|\mu(t, \nu)\|_H \leq N, \quad \text{for all } (t, \nu) \in [0,b] \times H, \]  
then the condition \( \eqref{f5} \) in the preceding theorem is automatically satisfied because \( d = 0 \).  
\end{remark}  
To examine the finite-approximate controllability of the semilinear system \( \eqref{eq1} \), we make the following assumption:

\((A_{6})\) The operator \( \alpha \Big(\alpha \mathcal{I} + \Gamma^{b}_{t_{p}} + \tilde{\Gamma}^{b}_{t_{p}} + \Theta^{t_{p}}_{0} + \tilde{\Theta}^{t_{p}}_{0}\Big)^{-1} \to 0 \) as \( \alpha \to 0^+ \) in the strong operator topology. Consequently, this implies that the linear system associated with \eqref{eq1} is approximately controllable on \( [0, b] \).
\bigskip

We now establish the following lemma, which is essential for proving the subsequent theorem.

\begin{lemma}\label{lem1}
    The set \(\{\mu(\cdot, z_\alpha(\cdot)) : \alpha > 0\}\) is bounded in \(L^2([r_1, r_2], H)\).
\end{lemma}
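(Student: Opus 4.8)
The claim is that $\{\mu(\cdot, z_\alpha(\cdot)) : \alpha > 0\}$ is bounded in $L^2([r_1,r_2],H)$, where presumably $[r_1,r_2]$ is one of the subintervals $[t_k, t_{k+1}]$ and $z_\alpha$ denotes the mild solution produced by Theorem 4.1 with control $u_\alpha$. The natural strategy is to combine the growth bound $(A_3)$, namely $\|\mu(t,z)\| \le g(t)\Lambda_\mu(\|z\|)$, with a uniform-in-$\alpha$ a priori bound on $\|z_\alpha\|_{PC}$. Let me sketch this.

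**Key steps.** First I would invoke the fixed-point radius from Step 1 of Theorem 4.1: there I showed that $G_\alpha(\mathcal{B}_r) \subset \mathcal{B}_r$ for some $r = r(\alpha) > 0$ under condition $\eqref{f5}$, so each $z_\alpha$ satisfies $\|z_\alpha\|_{PC} \le r(\alpha)$. The essential point is that this radius can be chosen \emph{uniformly} in $\alpha$ over the relevant range of $\alpha$: inspecting the estimates $\eqref{79}$ and $\eqref{78}$, the coefficients $M_3, M_4$ depend on $\alpha$ only through the factor $1/(\alpha(1-\delta))$ appearing in the control bound $\eqref{cont}$. Since we are concerned with the limit $\alpha \to 0^+$, where this factor blows up, I would instead argue via the specific structure of the control: by equation $\eqref{cont}$ the bound on $\|u_\alpha\|_U$ involves $\alpha\big(\alpha\mathcal{I} + \Gamma^b_{t_p} + \cdots\big)^{-1}\sigma$, and the quantity $\|\alpha(\alpha(\mathcal{I}-\pi_D) + \Gamma^b_{t_p}+\cdots)^{-1}\sigma\|$ stays bounded (indeed tends to a limit) as $\alpha \to 0^+$ by Lemma 3.1 and Assumption $(A_6)$. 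This replaces the crude $1/\alpha$ estimate with a genuinely bounded one, yielding a uniform bound $\|z_\alpha\|_{PC} \le R$ independent of $\alpha \in (0,1]$.

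**Concluding the $L^2$ bound.** Once $\sup_{\alpha \in (0,1]} \|z_\alpha\|_{PC} \le R < \infty$ is secured, I would apply $(A_3)$ directly and pointwise: for a.e.\ $s \in [r_1,r_2]$,
\[
\|\mu(s, z_\alpha(s))\|_H \le g(s)\,\Lambda_\mu(\|z_\alpha(s)\|) \le g(s)\,\Lambda_\mu(R),
\]
using that $\Lambda_\mu$ is non-decreasing. Integrating the square over $[r_1,r_2]$ gives
\[
\int_{r_1}^{r_2} \|\mu(s,z_\alpha(s))\|_H^2\, ds \le \Lambda_\mu(R)^2 \int_{r_1}^{r_2} g(s)^2\, ds \le \Lambda_\mu(R)^2 (r_2 - r_1)\|g\|_{L^\infty}^2,
\]
which is finite and independent of $\alpha$, establishing the desired uniform $L^2$-boundedness.

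**Main obstacle.** The routine part is the final $(A_3)$-integration; the delicate part is justifying the \emph{uniform} a priori bound $\|z_\alpha\|_{PC} \le R$ as $\alpha \to 0^+$. The naive estimate $\eqref{cont}$ carries a factor $1/\alpha$ that diverges, so the radius $r(\alpha)$ from Step 1 is not obviously bounded. I expect to circumvent this by exploiting Assumption $(A_6)$ together with the decomposition $\eqref{u2}$ of the resolvent-like operator, which shows that the composite quantity controlling the state actually remains bounded (in fact converges) as $\alpha \to 0^+$, rather than growing like $1/\alpha$. Making this uniformity precise — showing the $\alpha$-dependence in the control estimate is benign when fed back through the solution representation — is the crux of the argument.
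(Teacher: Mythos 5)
Your closing integration step is fine --- indeed slightly cleaner than the paper's, since you only need monotonicity of $\Lambda_\mu$ at a fixed radius --- but the step you yourself flag as the crux, the uniform bound $\sup_{\alpha>0}\Vert z_\alpha\Vert_{PC}\le R$, is not established by your argument, and the mechanism you propose cannot establish it. The control \eqref{f1} is built from $\Psi_\alpha=\big(\alpha(\mathcal{I}-\pi_{D})+\Gamma^{b}_{t_{p}}+\tilde{\Gamma}^{b}_{t_{p}}+\Theta^{t_{p}}_{0}+\tilde{\Theta}^{t_{p}}_{0}\big)^{-1}\sigma_\alpha$ itself, whereas Lemma \ref{q1}, Assumption $(A_6)$ and the equivalence (iii)$\Leftrightarrow$(iv) proved for the linear system only control $\alpha\Psi_\alpha$. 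Boundedness (or even convergence to zero) of $\alpha\Psi_\alpha$ gives no bound on $\Psi_\alpha$: if $\Vert\alpha\Psi_\alpha\Vert\sim\sqrt{\alpha}$ then $\Vert\Psi_\alpha\Vert\sim\alpha^{-1/2}\to\infty$. Nor can one hope for more in this setting: $S(t)$ is compact, so exact controllability fails, $\sigma_\alpha$ is generically outside the range of the Gramian, and $\Vert\Psi_\alpha\Vert$ genuinely blows up as $\alpha\to0^{+}$; the control and hence the intermediate states inherit this growth. The cancellation that produces the bounded quantity $z_\alpha(b)-h=-\alpha(\mathcal{I}-\pi_{D})\big(\alpha(\mathcal{I}-\pi_{D})+\Gamma^{b}_{t_{p}}+\cdots\big)^{-1}\sigma_\alpha$ occurs only at $t=b$; for $t<b$ the partial Gramian $\int_{t_{p}}^{t}S(t-s)\Omega\Omega^{*}S^{*}(b-s)\,ds$ does not recombine with the resolvent-like operator into anything bounded, so feeding $u_\alpha$ back through the solution formula still yields bounds that diverge in $\alpha$. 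This is a genuine gap in your plan.

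For comparison, the paper does not attempt what you attempt: it simply asserts that the ball $\mathcal{B}_r$ furnished by Theorem \ref{t1} can be taken with $r$ independent of $\alpha$ and $t$, and then integrates $(A_3)$ essentially as you do (using the linear bound $\Lambda_\mu(r)\le dr$ rather than monotonicity). Whether that assertion is adequately justified is a criticism one could level at the paper --- the fixed-point radius in Step 1 of Theorem \ref{t1} does depend on $\alpha$ through the factor $1/(\alpha(1-\delta))$ --- but it is the paper's stated argument, and your proposal neither reproduces nor repairs it. There is, moreover, a short correct route you missed: in the only place the lemma is applied, Theorem \ref{rrr8}, the hypotheses include $(A_5)$, i.e.\ $\Vert\mu(t,\nu)\Vert_{H}\le N$ for all $(t,\nu)\in[0,b]\times H$, whence $\int_{r_{1}}^{r_{2}}\Vert\mu(t,z_\alpha(t))\Vert_{H}^{2}\,dt\le N^{2}(r_{2}-r_{1})$ with no information about $z_\alpha$ needed at all. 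That observation, not a uniform state bound, is the robust way to obtain the claim in context.
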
  

\begin{proof}
Using Assumption (A3), we have  
\[
\|\mu(t, z)\|_H \leq g(t)\Lambda_\mu(\|z\|_H), \quad \forall t \in [r_1, r_2], \, z \in H,
\]  
where \(g \in L^\infty([r_1, r_2])\), and \(\Lambda_\mu\) is a continuous, non-decreasing function satisfying  
\[
\Lambda_\mu(r) \leq dr, \quad \forall r \geq 0,
\]  
for some constant \(d > 0\). Squaring this inequality, we get  
\[
\|\mu(t, z_\alpha(t))\|_H^2 \leq g(t)^2 \Lambda_\mu(\|z_\alpha(t)\|_H)^2 \leq d^2 g(t)^2 \|z_\alpha(t)\|_H^2.
\]

Integrating over \([r_1, r_2]\),  
\[
\int_{r_1}^{r_2} \|\mu(t, z_\alpha(t))\|_H^2 \, dt \leq d^2 \int_{r_1}^{r_2} g(t)^2  \|z_\alpha(t)\|_H^2\, dt.
\]
Since \(g \in L^\infty([r_1, r_2])\), there exists a constant \(C_g > 0\) such that \(|g(t)| \leq C_g\) a.e. \(t \in [r_1, r_2]\). This allows us to uniformly bound \(g(t)^2\) by \(C_g^2\). 

The condition \(z_\alpha \in \mathcal{B}_r\) implies that the set \(\{z_\alpha(t) : \alpha > 0\}\) is bounded in \(H\) for each \(t \in [r_1, r_2]\). Thus, there exists a constant \(r > 0\), independent of \(\alpha\) and \(t\), such that  
\[
\|z_\alpha(t)\|_H \leq r, \,  \forall t \in [r_1, r_2].
\]
Then, we have  
\[
\int_{r_1}^{r_2} \|\mu(t, z_\alpha(t))\|_H^2 \, dt \leq C.
\]
Define \(C = d^2 C_g^2 (r_2 - r_1)\). Since \(C\) is independent of \(\alpha\), the set \(\{\mu(\cdot, z_\alpha(\cdot)) : \alpha > 0\}\) is bounded in \(L^2([r_1, r_2], H)\). 
\end{proof}
\begin{theorem}\label{rrr8}
If assumptions \((A_1)\) and \((A_3)-(A_6)\) are satisfied, then the system \eqref{eq1} is $F_{A}$-controllable.
\end{theorem}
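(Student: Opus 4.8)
The plan is to reuse the linear machinery (the equivalences $(i)$--$(v)$ for \eqref{eq4}), now driven by the $\alpha$-dependent, nonlinearity-coupled datum $\sigma$, and to pass to the limit $\alpha\to 0^{+}$ along a subsequence. Write $\mathcal{G}=\Gamma^{b}_{t_{p}}+\tilde{\Gamma}^{b}_{t_{p}}+\Theta^{t_{p}}_{0}+\tilde{\Theta}^{t_{p}}_{0}$ for the Gramian $MM^{*}$ and $R(\alpha)=\big(\alpha(\mathcal{I}-\pi_{D})+\mathcal{G}\big)^{-1}$. For each $\alpha>0$, Theorem~\ref{t1} (together with Remark~\ref{rem} when $\mu$ is bounded, which forces $d=0$ and makes \eqref{f5} automatic) furnishes a mild solution $z_{\alpha}$ of \eqref{eq1} produced by the control $u_{\alpha}$ of \eqref{f1} and the impulsive controls $v^{\alpha}_{k}$. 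I denote by $\sigma_{\alpha}$ the datum $\sigma$ evaluated at $z=z_{\alpha}$, and by $\mathcal{N}(z_{\alpha})$ its nonlinear block (the two $\mu$-integrals).

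First I would record the end-point identity. Substituting $u_{\alpha}$ and $\{v^{\alpha}_{k}\}$ into the mild-solution formula \eqref{eq2}--\eqref{eq3} and using $M(u_{\alpha},\{v^{\alpha}_{k}\})=MM^{*}\Psi=\mathcal{G}R(\alpha)\sigma_{\alpha}$ exactly as in the linear computation leading to \eqref{k1}, the nonlinear contributions collect precisely into $\sigma_{\alpha}$, and the algebraic relation $\mathcal{G}R(\alpha)-\mathcal{I}=-\alpha(\mathcal{I}-\pi_{D})R(\alpha)$ gives
\begin{align*}
z_{\alpha}(b)-h=-\alpha(\mathcal{I}-\pi_{D})R(\alpha)\sigma_{\alpha}.
\end{align*}
Condition~$2$ of $F_{A}$-controllability is then immediate and \emph{exact} for every $\alpha>0$: applying $\pi_{D}$ and using $\pi_{D}(\mathcal{I}-\pi_{D})=0$ yields $\pi_{D}(z_{\alpha}(b)-h)=0$, i.e.\ $\pi_{D}z_{\alpha}(b)=\pi_{D}h$.

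It remains to secure Condition~$1$, $\|z_{\alpha}(b)-h\|<\varepsilon$, for a suitably small $\alpha$; this is the crux. The obstruction is that $\sigma_{\alpha}$ itself varies with $\alpha$, so one cannot simply invoke the strong-operator-topology convergence $(A_{6})$ as in the linear case, where $\sigma$ was fixed. To overcome this I would use compactness. By Lemma~\ref{lem1} the family $\{\mu(\cdot,z_{\alpha}(\cdot)):\alpha>0\}$ is bounded in $L^{2}$ on each subinterval; since $S(t)$ is compact for $t>0$ by $(A_{1})$, each operator $f\mapsto\int_{t_{i-1}}^{t_{i}}S(t_{i}-s)f(s)\,ds$, as well as $f\mapsto\int_{t_{p}}^{b}S(b-s)f(s)\,ds$, is a compact map $L^{2}\to H$ (approximate it in operator norm by $S(\eta)\int S(\,\cdot-\eta-s)f(s)\,ds$ and let $\eta\to 0^{+}$). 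Hence $\{\mathcal{N}(z_{\alpha}):\alpha>0\}$, and therefore $\{\sigma_{\alpha}:\alpha>0\}$, is relatively compact in $H$.

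Finally I would pass to the limit. Fixing a sequence $\alpha_{n}\to 0^{+}$, relative compactness lets me extract a subsequence (not relabelled) with $\sigma_{\alpha_{n}}\to\sigma_{*}$ in $H$. Splitting
\begin{align*}
\|z_{\alpha_{n}}(b)-h\|\le\big\|\alpha_{n}(\mathcal{I}-\pi_{D})R(\alpha_{n})\sigma_{*}\big\|+\big\|\alpha_{n}(\mathcal{I}-\pi_{D})R(\alpha_{n})(\sigma_{\alpha_{n}}-\sigma_{*})\big\|,
\end{align*}
the first term tends to $0$ by $(A_{6})$ together with \eqref{u2} (strong convergence applied to the fixed vector $\sigma_{*}$), while the second is bounded by $\tfrac{1}{1-\beta}\|\sigma_{\alpha_{n}}-\sigma_{*}\|\to 0$, using the uniform bound $\|\alpha(\mathcal{I}-\pi_{D})R(\alpha)\|\le\tfrac{1}{1-\beta}$ obtained from Lemma~\ref{q2}, \eqref{u3} and Lemma~\ref{q1}(b) with $\beta=\max_{0\le\alpha\le 1}\|\alpha(\alpha\mathcal{I}+\mathcal{G})^{-1}\pi_{D}\|<1$. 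Thus $\|z_{\alpha_{n}}(b)-h\|\to 0$, so choosing $n$ with $\|z_{\alpha_{n}}(b)-h\|<\varepsilon$ and taking $u=u_{\alpha_{n}}$, $v_{k}=v^{\alpha_{n}}_{k}$ realizes both defining conditions of $F_{A}$-controllability on $[0,b]$. The main difficulty is precisely this compactness-plus-splitting step, which decouples the fixed SOT-limit part (killed by $(A_{6})$) from the $\alpha$-dependent remainder (killed by the uniform bound and strong convergence of $\sigma_{\alpha_{n}}$).
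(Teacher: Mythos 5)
Your proposal is correct, and it shares the paper's overall skeleton: existence of $z_{\alpha}$ from Theorem \ref{t1} (with \eqref{f5} trivialized by $(A_{5})$, as you note via Remark \ref{rem}), the end-point identity $z_{\alpha}(b)-h=-\alpha(\mathcal{I}-\pi_{D})\big(\alpha(\mathcal{I}-\pi_{D})+\Gamma^{b}_{t_{p}}+\tilde{\Gamma}^{b}_{t_{p}}+\Theta^{t_{p}}_{0}+\tilde{\Theta}^{t_{p}}_{0}\big)^{-1}\sigma_{\alpha}$ (the paper's \eqref{q123}), exactness of the projection condition from $\pi_{D}(\mathcal{I}-\pi_{D})=0$, extraction of a subsequence along which $\sigma_{\alpha}$ converges in norm to a fixed vector, and a splitting that kills the fixed-vector part by $(A_{6})$ and the remainder by a uniform bound on $\alpha(\mathcal{I}-\pi_{D})R(\alpha)$. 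Where you genuinely diverge is the mechanism for the norm convergence of $\sigma_{\alpha_{n}}$, which is the crux in both arguments. The paper invokes the Banach--Alaoglu theorem twice (on $\{z_{\alpha}(t)\}$ and, via Lemma \ref{lem1}, on $\{\mu(\cdot,z_{\alpha}(\cdot))\}$) to get the \emph{weak} $L^{2}$-limit \eqref{456}, and then asserts in \eqref{t678} that $\Vert\sigma_{\alpha_{i}}-\eta\Vert_{H}\to 0$ by Cauchy--Schwarz and the Dominated Convergence Theorem; as written this step is thin, since weak $L^{2}$ convergence does not by itself give $\int\Vert\mu(s,z_{\alpha_{i}}(s))-\mu(s)\Vert^{2}ds\to 0$, and the honest mechanism is that the maps $f\mapsto\int S(t_{i}-s)f(s)\,ds$ send weakly convergent sequences to strongly convergent ones because they are compact. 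You make precisely that compactness the centerpiece: using $(A_{1})$ and the semigroup identity $S(t_{i}-s)=S(\eta)S(t_{i}-\eta-s)$ you show these integral operators are norm-limits of compact operators, hence compact, so $\{\sigma_{\alpha}\}$ is relatively compact in $H$ and a norm-convergent subsequence exists with no appeal to weak limits at all. This buys a cleaner and fully rigorous argument at exactly the point where the paper's proof is weakest, at the cost of having to justify operator compactness explicitly. The remaining cosmetic difference—your uniform bound $\Vert\alpha(\mathcal{I}-\pi_{D})R(\alpha)\Vert\le\frac{1}{1-\beta}$ from \eqref{u2}--\eqref{u3} and Lemma \ref{q1}, versus the paper's bound $\frac{\alpha_{i}}{\min(\alpha_{i},\mathscr{R})}$ from Lemma \ref{q2}—is immaterial; both are valid and serve the same purpose.
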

\begin{proof}
  By applying Theorem \ref{t1}, it follows that for any $\alpha > 0$ and $h \in H$, the system represented by \eqref{eq1} possesses a mild solution $z_{\alpha} \in B_{r}$, achieved through the control defined as follow:
\begin{align}\label{f55}
    u_{\alpha}(s)&=\bigg( \sum_{k=1}^{p}\Omega^{*}S^{*}(t_{k}-s)\prod_{i=k+1}^{p}S^{*}(t_{i}-t_{i-1})S^{*}(b-t_{p})\chi_{(t_{k-1},t_{k})}+\Omega^{*}S^{*}(b-s)\chi_{(t_{p},b)}\bigg)\Psi_{\alpha},
\end{align}
where 
\begin{align*}
    \Psi_{\alpha}&= \big(\alpha (\mathcal{I}-\pi_{D})+\Gamma^{b}_{t_{p}} + \tilde{\Gamma}^{b}_{t_{p}} + \Theta^{t_{p}}_{0} + \tilde{\Theta}^{t_{p}}_{0}\big)^{-1}\sigma_{\alpha},\\
    \sigma_{\alpha}&=h-S(b - t_p) \sum_{j=p}^1 (I + B_j) S(t_j - t_{j-1}) z_0
    -S(b - t_p)\sum_{i=1}^{p}\prod_{j=p}^{i+1}(\mathcal{I}+B_{j})S(t_{j}-t_{j-1})
	\\
     &\times  (\mathcal{I}+B_{i})\int_{t_{i-1}}^{t_{i}}S(t_{i}-s)\mu(s,z_{\alpha}(s))\,ds-\int_{t_{p}}^{b}S(b-s)\mu(s,z_{\alpha}(s))\, ds.
\end{align*}
 Consequently, it follows directly that
  \begin{align*}
  z_{\alpha}(t)=
      \begin{cases}
         S(t)z(0)+\int_{0}^{t} S(t-s)\big[\Omega u_{\alpha}(s)+\mu(s,z_{\alpha}(s))\big]ds,\,  0\leq t\leq t_{1},\\\\
S(t-t_{k})z_{\alpha}(t^{+}_{k})+\int_{t_{k}}^{t} S(t-s)\big[\Omega u_{\alpha}(s)+\mu(s,z_{\alpha}(s))\big]ds,\,
 t_{k}<t\leq t_{k+1},\ k=1,2,\dots,p, 
      \end{cases}
  \end{align*}
where  
\begin{align*}
z_{\alpha}(t^{+}_{k}) &= \prod_{j=k}^{1}(\mathcal{I}+B_{j})S(t_{j}-t_{j-1})z_{0}\\
&+ \sum_{i=1}^{k}\prod_{j=k}^{i+1}(\mathcal{I}+B_{j})S(t_{j}-t_{j-1})
(\mathcal{I}+B_{i})\int_{t_{i-1}}^{t_{i}}S(t_{i}-s)\Omega u_{\alpha}(s)ds\\
&+ \sum_{i=1}^{k}\prod_{j=k}^{i+1}(\mathcal{I}+B_{j})S(t_{j}-t_{j-1})
(\mathcal{I}+B_{i})\int_{t_{i-1}}^{t_{i}}S(t_{i}-s)\mu(s,z_{\alpha}(s))ds\\
& + \sum_{i=2}^{k}\prod_{j=k}^{i}(\mathcal{I}+B_{j}) S(t_{j}-t_{j-1}) D_{i-1}v_{i-1} + D_{k}v_{k}.
\end{align*}
Using the previously mentioned relationships, we obtain
\begin{align}\label{q123}
    z_{\alpha}(b)&=S(b-t_{k})z_{\alpha}(t^{+}_{k})+\int_{t_{k}}^{b} S(b-s)\big[\Omega u_{\alpha}(s)+\mu(s,z_{\alpha}(s))\big]ds\nonumber\\
&=h-\alpha(I-\pi_{D})\Big(\alpha (\mathcal{I}-\pi_{D})+\Gamma^{b}_{t_{p}} + \tilde{\Gamma}^{b}_{t_{p}} + \Theta^{t_{p}}_{0} + \tilde{\Theta}^{t_{p}}_{0}\Big)^{-1}\sigma_{\alpha}
\end{align}
The condition \( z_\alpha \in \mathcal{B}_r \) implies that the set \( \{z_\alpha(t) : \alpha > 0\} \) is bounded in \( H \) for each \( t \in [0, b] \). By applying the Banach–Alaoglu theorem, we can extract a subsequence \( \{z_{\alpha_i}(\cdot)\}_{i=1}^\infty \) such that,
\begin{align*}
    z_{\alpha_{i}}(t)\xrightarrow{w}z(t)\,\, in\,\, H\,\, \text{as}\,\, \alpha_{i}\to 0+\, (i\to \infty),\, \text{for all}  \,\, t\in [0,b].
\end{align*}
Moreover, by using Lemma \ref{lem1}, we obtain the set $\{ \mu(\cdot, z_{\alpha}(\cdot): \, \alpha>0 \}$ in $L^{2}([r_{1},r_{2}], H)$ is bounded. And, by applying the Banach-Alaoglu theorem, we can extract a subsequence $\{ \mu(\cdot, z_{\alpha_{i}}(\cdot)\}_{i=1}^{\infty}$, such that
\begin{align}\label{456}
    \mu(\cdot, z_{\alpha_{i}}(\cdot)) \xrightarrow{\text{w}}\mu(\cdot)\, \text{in} \, L^{2}([r_{1},r_{2}], H)\, \text{as}\, \alpha_{i}\to 0+\, (i\to\infty).
    \end{align}
By apply the Cauchy-Schwarz inequality, we have
\begin{align}\label{t678}
    \Vert \sigma_{\alpha_{i}}-\eta\Vert_{H}
     &\leq \sum_{m=0}^{p}M_{S}^{m}(1+M_{B})^{m} \bigg\Vert\int_{t_{p-m}}^{t_{p-m+1}}S(t_{p-m+1}-s)\big[\mu(s,z_{\alpha_{i}}(s))-\mu(s)\big]ds  \bigg\Vert_{H}\nonumber\\
     &\leq \sum_{m=0}^{p}M_{S}^{m+1}(1+M_{B})^{m}\sqrt{t_{p-m+1}-t_{p-m}} \bigg(\int_{t_{p-m}}^{t_{p-m+1}}\big\Vert \mu(s,z_{\alpha_{i}}(s))-\mu(s)\big\Vert^{2}_{H} \,ds \bigg)^{\frac{1}{2}} \nonumber\\
     &\to 0\, \, \text{as}\,\,  \alpha_{i}\to 0+ \,(i\to\infty),
\end{align}
where
\begin{align*}
   \eta&=h-S(b - t_p) \sum_{j=p}^1 (I + B_j) S(t_j - t_{j-1}) z_0
    -S(b - t_p)\sum_{i=1}^{p}\prod_{j=p}^{i+1}(\mathcal{I}+B_{j})S(t_{j}-t_{j-1})\\
	 &\times (\mathcal{I}+B_{i}) \int_{t_{i-1}}^{t_{i}}S(t_{i}-s)\mu(s)\,ds-\int_{t_{p}}^{b}S(b-s)\mu(s)\,ds.
\end{align*}
In \eqref{t678}, the convergences described in \eqref{456} are established using the Dominated Convergence Theorem. Furthermore, by utilizing \eqref{q123}, we obtain
\begin{align*}
    \Vert z_{\alpha_{i}}(b)-h\Vert_{H}&\leq \Big\Vert \alpha_{i}(I-\pi_{D})\Big(\alpha_{i} (\mathcal{I}-\pi_{D})+\Gamma^{b}_{t_{p}} + \tilde{\Gamma}^{b}_{t_{p}} + \Theta^{t_{p}}_{0} + \tilde{\Theta}^{t_{p}}_{0}\Big)^{-1}\sigma_{\alpha}\Big\Vert_{H}\\
    &\leq \Big\Vert \alpha_{i}(I-\pi_{D})\Big(\alpha_{i} (\mathcal{I}-\pi_{D})+\Gamma^{b}_{t_{p}} + \tilde{\Gamma}^{b}_{t_{p}} + \Theta^{t_{p}}_{0} + \tilde{\Theta}^{t_{p}}_{0}\Big)^{-1}\big(\sigma_{\alpha}-\eta\big)\Big\Vert_{H}\\
    &+ \Big\Vert \alpha_{i}(I-\pi_{D})\Big(\alpha_{i} (\mathcal{I}-\pi_{D})+\Gamma^{b}_{t_{p}} + \tilde{\Gamma}^{b}_{t_{p}} + \Theta^{t_{p}}_{0} + \tilde{\Theta}^{t_{p}}_{0}\Big)^{-1}\eta\Big\Vert_{H}\\
    &\leq \Vert I-\pi_{D} \Vert_{L(H)}\big\Vert \alpha_{i}\big(\alpha_{i} (\mathcal{I}-\pi_{D})+\Gamma^{b}_{t_{p}} + \tilde{\Gamma}^{b}_{t_{p}} + \Theta^{t_{p}}_{0} + \tilde{\Theta}^{t_{p}}_{0}\big)^{-1}\big\Vert_{L(H)}\\
     &\times  \Vert \sigma_{\alpha}-\eta\Vert_{H}+\Vert I-\pi_{D} \Vert_{L(H)}\big\Vert \alpha_{i}\big(\alpha_{i} (\mathcal{I}-\pi_{D})+\Gamma^{b}_{t_{p}} + \tilde{\Gamma}^{b}_{t_{p}} + \Theta^{t_{p}}_{0} + \tilde{\Theta}^{t_{p}}_{0}\big)^{-1}\eta \big\Vert_{H}\\
     &\leq \big\Vert \alpha_{i}\big(\alpha_{i} (\mathcal{I}-\pi_{D})+\Gamma^{b}_{t_{p}} + \tilde{\Gamma}^{b}_{t_{p}} + \Theta^{t_{p}}_{0} + \tilde{\Theta}^{t_{p}}_{0}\big)^{-1}\big\Vert_{L(H)}\Vert \sigma_{\alpha}-\eta\Vert_{H}\\
     &  +\big\Vert \alpha_{i}\big(\alpha_{i} (\mathcal{I}-\pi_{D})+\Gamma^{b}_{t_{p}} + \tilde{\Gamma}^{b}_{t_{p}} + \Theta^{t_{p}}_{0} + \tilde{\Theta}^{t_{p}}_{0}\big)^{-1}\eta \big\Vert_{H}\\
     &\leq \frac{\alpha_{i}}{\min(\alpha_{i},\mathscr{R})}\Vert \sigma_{\alpha}-\eta\Vert_{H} +\big\Vert \alpha_{i}\big(\alpha_{i} (\mathcal{I}-\pi_{D})+\Gamma^{b}_{t_{p}} + \tilde{\Gamma}^{b}_{t_{p}} + \Theta^{t_{p}}_{0} + \tilde{\Theta}^{t_{p}}_{0}\big)^{-1}\eta \big\Vert_{H}.
\end{align*}
Where $\mathscr{R}=\min\Big\{\Big\langle \pi_{D}\Big(\Gamma^{b}_{t_{p}} + \tilde{\Gamma}^{b}_{t_{p}} + \Theta^{t_{p}}_{0} + \tilde{\Theta}^{t_{p}}_{0}\Big) \pi_{D}\varphi, \varphi\Big\rangle\, : \, \Vert \pi_{D}\varphi\Vert_{H}=1 \Big\}>0.$ Using the convergence \eqref{t678}, Assumption $(A_{6})$ and Theorem \ref{t1}, we get 
\begin{align*}
    \Vert z_{\alpha_{i}}(b)-h\Vert_{H}\to 0\,\,  as\,\,  \alpha_{i}\to 0+\, (i\to 0).
\end{align*}
Moreover, by the estimate \eqref{q123}, we also get 
\begin{align*}
    \pi_{D}z_{\alpha_{i}}(b)=\pi_{D}h.
\end{align*}
Under this condition, the system described by \eqref{eq1} is $F_{A}$-controllable on \( [0, b] \).

\end{proof} 

\section{Application}

We study an impulsive evolution system connected to the heat equation:  

\begin{align}\label{kk1}
    \begin{cases}
        \frac{\partial \xi(t,z)}{\partial t} = \frac{\partial^2 \xi(t,z)}{\partial z^2} + \Omega u(t,z) + \mu(t, \xi(t,z)), & z \in (0, 1),\\
        \xi(t, 0) = \xi(t, 1) = 0, & t \in [0, b] \setminus \{t_1, \dots, t_p\},\\
        \xi(0, z) = \xi_0(z), & z \in [0, 1],\\
        \Delta \xi(t_k, z) = -\xi(t_k, z) - v_k(z), & z \in (0, 1),\, k = 1, \dots, p-1.
    \end{cases}
\end{align}

Consider the Hilbert space \( H = L^2([0, 1]) \) and the operator \( A: H \to H \), defined by \( A\xi = \xi^{\prime\prime} \). The domain of \( A \) is given by  
\[
D(A) = \{ w \in H : w \text{ and } w' \text{ are absolutely continuous},\, w'' \in H,\, w(0) = w(1) = 0 \}.
\]  

The operator \( A \) can also be expressed as  
\[
A w = -\sum_{n=1}^\infty n^2 \langle w, e_n \rangle e_n, \quad w \in D(A),
\]  

where the eigenvalues are \( \lambda_n = n^2 \), and the eigenfunctions are \( e_n(z) = \sqrt{2} \sin(nz) \), \( z \in [0, 1] \), for \( n = 1, 2, \dots \).  
 
Given an initial condition \( v_0 \in L^2([0, 1]) \), the impulsive term is specified as \( \Delta \xi(t_k, z) = -\xi(t_k, z) - v_k(z) \), where the operators \( B_k = D_k = -I \). Additionally, let \( \mu \)  satisfies Assumptions $(A_{2})$ and $(A_{3})$.  

It is well-known that the operator \( A \) generates a compact \(C_0\)-semigroup \( S(t) \) on \( H \), which can be represented as:  

\[
S(t) w = \sum_{n=1}^\infty e^{-n^2 t} \langle w, e_n \rangle e_n, \quad w \in H.
\]  

We define an infinite-dimensional control space \( U \) as:  

\[
U = \left\{ u : u = \sum_{n=2}^\infty u_n e_n, \, \sum_{n=2}^\infty u_n^2 < \infty \right\},
\]  

with the norm \( \|u\|_U = \sqrt{\sum_{n=2}^\infty u_n^2}. \)  

Next, we define the mapping \( \Omega: U \to H \) by 

\[
\Omega u = 2u_2 e_1 + \sum_{n=2}^\infty u_n e_n.
\]  

Since the semigroup \( S(t) \) generated by \( A \) is compact, the linear system associated with \eqref{kk1} does not achieve exact controllability. However, it satisfies \( A \)-controllability, as shown in \cite{Mahmudov2}. Consequently, system \eqref{kk1} can be reformulated in the abstract framework of equation \eqref{eq1}. Based on Theorem \ref{rrr8}, the system is \( F_A \)-controllable over the interval \([0, b]\).
\section{Conclusion}
In this study, we have analyzed impulsive evolution equations within the framework of Hilbert spaces.
First, by employing a resolvent-like operator, we established the $F_{A}-$ controllability for
linear systems. Subsequently, leveraging the SFPT, we demonstrated the existence
of solutions and verified the $F_{A}-$ controllability of semilinear impulsive systems in Hilbert
spaces. Finally, these results were extended to a broader setting, specifically to the heat equation, thereby
illustrating the applicability of our findings to more complex dynamical systems.

\end{document}